\documentclass[final]{siamltex}

\usepackage[nosumlimits]{amsmath}
\usepackage{amsfonts}
\usepackage{amssymb}
\usepackage{mathtools}
\usepackage{mathrsfs}
\usepackage{graphicx}
\usepackage{color}
\usepackage{ulem}
\usepackage{multicol}
\usepackage{cite} 
\usepackage{placeins}
\usepackage{pifont}
\usepackage[dvipsnames]{xcolor}
\usepackage{tikz}
\usepackage{pgfplots}
\usetikzlibrary{shapes}
\usepackage[textwidth=20mm]{todonotes}
\usepackage{algorithm}
\usepackage{algorithmic}

\usepackage{setspace}
\usepackage{url}
\usepackage{hyperref}
\usepackage{float}
\usepackage{booktabs}
\usepackage{subcaption}
\usepackage{epstopdf}
\usepackage{placeins}

\newtheorem{example}[theorem]{Example}

\newcommand*{\trans}{\top}
\newcommand*{\herm}{*}
\newcommand*{\pinv}{\dagger}
\newcommand*{\real}{\mathbb{R}}
\newcommand*{\complex}{\mathbb{C}}

\newcommand*{\Mat}[1]{\boldsymbol{#1}}
\newcommand*{\tMat}[1]{\widetilde{\Mat{#1}}}
\newcommand*{\hMat}[1]{\widehat{\Mat{#1}}}
\newcommand*{\subspace}[1]{\mathcal{#1}}
\newcommand*{\bigO}{\mathscr{O}}

\newcommand*{\diff}{\,\mathrm{d}}
\newcommand*{\myspan}{\operatorname{span}}
\newcommand*{\errfunc}{\mathrm{err}}
\newcommand*{\range}{\mathcal{R}}
\newcommand*{\spec}{\varLambda}

\newcommand*{\sign}{\operatorname{sign}}
\newcommand*{\cond}{\kappa}
\newcommand*{\numrange}{\mathcal{W}}

\newcommand*{\iter}{\mathrm{iter}}
\newcommand*{\matvec}{\mathrm{matvec}}
\newcommand*{\vecs}{\mathrm{vec}}

\newcommand*{\Kry}{\mathcal{K}}
\newcommand*{\incr}{\mathrm{incr}}
\newcommand*{\poly}{\mathcal{P}}
\newcommand*{\apply}{\mathrm{apply}}
\newcommand*{\sketching}{\mathrm{sketching}}

\newcommand*{\orthBasis}{\Mat{V}}

\newcommand*{\Basis}{\Mat{B}}
\newcommand*{\basis}{\Mat{b}}
\newcommand*{\updatedbasis}{\tMat{b}}

\newcommand*{\Hessen}{\Mat{H}}
\newcommand*{\uHessen}{\underline{\Mat{H}}}
\newcommand*{\hessen}{h}
\newcommand*{\updatedHessen}{\tMat{H}}
\newcommand*{\updatedhessen}{\tilde{h}}

\newcommand*{\tbasis}{\tMat{b}}

\newcommand*{\tskbasis}{\tMat{p}}
\newcommand*{\skbasis}{\Mat{p}}
\newcommand*{\skBasis}{\Mat{P}}

\newcommand*{\ABasis}{\Mat{W}}

\newcommand*{\skAbasis}{\Mat{q}}

\newcommand*{\phaseskAbasis}{\Mat{z}}

\newcommand*{\RightHalfPlane}{\complex_{+}}

\let\oldexample\example
\renewcommand{\example}{\oldexample\normalfont}
\newtheorem{remarksimple}[theorem]{Remark}
\let\oldremarksimple\remarksimple
\renewcommand{\remarksimple}{\oldremarksimple\normalfont}

\newtheorem{experiment}[theorem]{Experiment}
\let\oldexperiment\experiment
\renewcommand{\experiment}{\oldexperiment\normalfont}

\usepackage[dvipsnames]{xcolor}
\usepackage{pgfplots}
\usepgfplotslibrary{groupplots}
\usetikzlibrary{external}

\pgfkeys{/pgf/images/include external/.code={\includegraphics{#1}}}

\usetikzlibrary{decorations.markings}
\makeatletter
\tikzset{
  nomorepostactions/.code={\let\tikz@postactions=\pgfutil@empty},
  mymark/.style 2 args={decoration={markings,
    mark= between positions 0 and 1 step (1/9)*\pgfdecoratedpathlength with{%
        \tikzset{#2,every mark}\tikz@options
        \pgfuseplotmark{#1}%
      },  
    },
    postaction={decorate},
    /pgfplots/legend image post style={
        mark=#1,mark options={#2},every path/.append style={nomorepostactions}
    },
  },
}
\makeatother

\pgfplotscreateplotcyclelist{list_std}{%
Cerulean,line width=1.5pt, solid\\
BurntOrange,line width=1.5pt, densely dashed\\
OliveGreen,line width=1.5pt, loosely dotted\\
Orchid,line width=1.5pt, dash dot\\
}

\pgfplotscreateplotcyclelist{list_no_lines}{%
Cerulean,line width=1.25pt, mark=square, only marks\\
BurntOrange,line width=1.25pt, mark=o, only marks\\
OliveGreen,line width=1.25pt, mark=diamond, only marks\\
Orchid,line width=1.25pt, mark=x, only marks\\
}
\pgfplotsset{compat=1.17}

\title{Sketch-and-Restart: Randomized Sketching in Quadrature-Based Restarting for Matrix Functions}
\author{
Stefan G{\"{u}}ttel\thanks{Department of Mathematics, The University of Manchester, Oxford Road, Manchester, M139PL,
United Kingdom, \texttt{stefan.guettel@manchester.ac.uk}}
\and Jingyu Liu\thanks{School of Mathematical Sciences, Fudan University, 220 Handan Road, Shanghai, 200433, China, \texttt{jyliu22@m.fudan.edu.cn}}
\and Lauri Nyman\thanks{Department of Mathematics, The University of Manchester, Oxford Road, Manchester, M139PL,
United Kingdom, \texttt{lauri.nyman@manchester.ac.uk}}
}
\date{\today}
\begin{document}

\newcommand{\rev}[1]{{\color{black}#1}}
\newcommand{\revv}[1]{{\color{black}#1}}

\renewcommand{\thefootnote}{\fnsymbol{footnote}}
\maketitle
\pagestyle{myheadings}
\thispagestyle{plain}
\markboth{S.~G{\"{U}}TTEL, J.~LIU, AND L.~NYMAN}{SKETCH AND RESTART}

\begin{abstract}
We develop a sketch-and-restart framework for computing the action of a matrix function on a vector, \(f(\Mat{A})\Mat{b}\), where \(\Mat{A}\) is large, sparse, and non-Hermitian.
The framework combines quadrature-based restarting with Arnoldi-like decompositions generated by sketched or truncated Arnoldi processes.
Within this framework, we develop two classes of restarted algorithms.
The first uses a fixed Krylov subspace dimension and is based either on the sketched Arnoldi process or on a new sketched harmonic Arnoldi process proposed in this work.
The second class chooses the Krylov subspace dimension adaptively by running the truncated Arnoldi process until the condition number of the generated basis, estimated from its sketch, exceeds a prescribed threshold.
We also establish the convergence of the restarted sketched harmonic Arnoldi method for Stieltjes functions under the assumption that \(\Mat{A}\) is positive real.
Numerical experiments demonstrate the effectiveness of the proposed framework, including the computational savings achieved through sketching, the storage reduction enabled by adaptive truncation, and the acceleration obtained from thick restarting.
\end{abstract}

\begin{keywords}
matrix function, Krylov method, sketching, restarting
\end{keywords}

\begin{AMS}
65F60, 65F50, 68W20 
\end{AMS}

\section{Introduction}
\label{sec:introduction}

This paper considers the computation of \(f(\Mat{A})\Mat{b}\), where \(\Mat{A} \in \complex^{N \times N}\) is a matrix, \(\Mat{b} \in \complex^{N}\) is a vector, and \(f\) is a suitable scalar function.
Such computations arise in many areas of scientific computing.

If \(\Mat{A}\) is small and dense, one can first compute \(f(\Mat{A})\) explicitly using algorithms such as those described in~\cite{Higham_2008}, and then apply it to \(\Mat{b}\).
For large matrices, however, this approach is infeasible because of its computational and memory costs.
\emph{Krylov subspace methods} are therefore commonly used to approximate \(f(\Mat{A})\Mat{b}\), as they require only matrix-vector products with \(\Mat{A}\), which can often be computed efficiently when \(\Mat{A}\) is sparse.
Their main limitation is the need to store and orthogonalize the Krylov basis.
For non-Hermitian matrices, the Arnoldi process requires \(\bigO(N m^{2})\) operations to generate \(m\) orthonormal basis vectors and \(\bigO(N m)\) storage for the basis, and these costs can become substantial when \(m\) is large.

Many approaches have been proposed to address these limitations.
One important class is based on \emph{restarting}, in which only a fixed number of basis vectors need to be stored~\cite{Eiermann_Ernst_2006, Afanasjew_Eiermann_Ernst_Guettel_2008}.
The \emph{quadrature-based restarting} method of~\cite{Frommer_Guettel_Schweitzer_2014} uses an integral representation of \(f\), together with interpolation polynomials, to derive an integral representation of the error.
Approximating this error representation by quadrature allows the process to be restarted without storing all previously generated basis vectors.
However, restarted methods may converge slowly, and this issue can be alleviated by \emph{thick restarting} strategies~\cite{Morgan_2002,Eiermann_Ernst_Guettel_2011, Burke_Guettel_2024}.

Another class of approaches is based on \emph{randomized sketching}~\cite{Guettel_Schweitzer_2023, Cortinovis_Kressner_Nakatsukasa_2024}.
These methods approximate \(f(\Mat{A})\Mat{b}\) using a nonorthonormal basis of a Krylov subspace, often generated by the sketched Arnoldi process~\cite{Balabanov_Grigori_2022} or the truncated Arnoldi process~\cite{Nakatsukasa_Tropp_2024}.
The sketched Arnoldi process computes the inner products required for orthogonalization in the sketched space, thereby reducing the orthogonalization cost by nearly one half.
The truncated Arnoldi process reduces the orthogonalization cost to \(\bigO(N m t)\), where \(t\) is the truncation parameter.
However, the condition number of the generated basis may grow exponentially~\cite{Beckermann_Townsend_2017}, which limits the usable Krylov subspace dimension.

We focus on the case in which \(\Mat{A}\) is large, sparse, and non-Hermitian.
We develop a sketch-and-restart framework for computing \(f(\Mat{A})\Mat{b}\) by combining randomized sketching with quadrature-based restarting.
The key observation is that several Krylov and sketched Krylov approximations admit the common form \(\Basis_{m}f(\Hessen_{m})\Mat{e}_{1}\beta\), which is equivalent to a polynomial approximation \(p(\Mat{A} \Mat{b}\).
This unified representation leads naturally to a common integral error representation and restarting mechanism.

Within this framework, we develop two classes of restarted algorithms.
The first uses a fixed Krylov subspace dimension and is based either on the existing sketched Arnoldi process or on a new sketched harmonic Arnoldi process proposed in this work.
These processes lead, respectively, to restarted sketched Arnoldi and sketched harmonic Arnoldi methods.
The new process generates a nonorthonormal Krylov basis satisfying the sketched harmonic orthogonality condition needed for the restarting framework.
The second chooses the Krylov subspace dimension adaptively by running the truncated Arnoldi process until the condition number of the generated basis, estimated from its sketch, exceeds a prescribed threshold.
A rank-\(1\) update is then applied so that the resulting Arnoldi-like decomposition satisfies the orthogonality condition required by the framework.
We also provide a convergence analysis of the restarted harmonic Arnoldi and sketched harmonic Arnoldi methods.

The remainder of this paper is organized as follows.
Section~\ref{sec:krylov_skrylov} formulates Krylov and sketched Krylov approximations within a common Arnoldi-like framework.
Section~\ref{sec:quad_restarting} extends quadrature-based restarting to this framework.
Section~\ref{sec:sketching_quad_restarting} presents the proposed algorithms, while Section~\ref{sec:convergence_analysis} analyzes their convergence.
Numerical experiments are presented in Section~\ref{sec:numerical_experiments}.
Finally, Section~\ref{sec:conclusions} concludes the paper.

We use the following notation.
The open right half-plane is denoted by
\(\RightHalfPlane = \{z \in \complex : \Re(z) > 0\}\).
The symbol \(\|\cdot\|\) denotes the Euclidean norm for vectors and the spectral norm for matrices.
For a matrix \(\Mat{A}\), we denote its spectrum by \(\spec(\Mat{A})\), its condition number by
\(\cond(\Mat{A}) = \|\Mat{A}\| \|\Mat{A}^{-1}\|\), and its numerical range by
\begin{equation*}
    \numrange(\Mat{A})
    :=
    \{\Mat{x}^{\herm}\Mat{A}\Mat{x} : \Mat{x} \in \complex^{N},\ \|\Mat{x}\| = 1\}.
\end{equation*}
A matrix \(\Mat{A} \in \complex^{N \times N}\) is called positive real if
\(\Re(\Mat{x}^{\herm}\Mat{A}\Mat{x}) > 0\) for every nonzero
\(\Mat{x} \in \complex^{N}\).
A matrix \(\Mat{V} \in \complex^{N \times m}\) is called orthonormal if its columns form an orthonormal set.
For a vector \(\Mat{x}\) and a subspace \(\subspace{V}\), we write
\(\Mat{x} \perp \subspace{V}\) if \(\Mat{x}\) is orthogonal to \(\subspace{V}\).
If \(\Mat{V}\) is a basis of \(\subspace{V}\), this condition is equivalent to
\(\Mat{V}^{\herm}\Mat{x} = \Mat{0}\), and we also write
\(\Mat{x} \perp \Mat{V}\).
The set of polynomials of degree at most \(m\) is denoted by \(\poly_{m}\).

\section{Krylov subspace methods and their sketched variants}
\label{sec:krylov_skrylov}

This section places several Krylov and sketched Krylov approximations within a common Arnoldi-like framework, which serves as the algebraic foundation of the sketch-and-restart framework developed in the subsequent sections.
Specifically, we show that the Arnoldi and harmonic Arnoldi approximations, together with their sketched variants, can all be expressed in the form \(\Basis_{m} f(\Hessen_{m})\Mat{e}_{1}\beta\), where \(\Basis_{m}\) is a possibly nonorthonormal basis of a Krylov subspace and \(\Hessen_{m}\) is an upper Hessenberg matrix arising from an Arnoldi-like decomposition.
This observation forms the algebraic foundation for the quadrature-based restarting mechanism developed in Section~\ref{sec:quad_restarting}.

Suppose that \(f\) is analytic on the closure of a domain \(\Omega \subset \complex\) containing \(\spec(\Mat{A})\), and admits the integral representation
\begin{equation}
\label{eq:f_integral_representation}
    f(z)
    =
    \int_{\Gamma} \frac{g(t)}{t - z} \diff t,
    \qquad z \in \Omega,
\end{equation}
where \(\Gamma \subset \complex \setminus \Omega\) is a contour and \(g\) is a suitable function on \(\Gamma\).
Such a representation follows, for example, from the Cauchy integral formula, in which case \(\Gamma\) encloses \(\spec(\Mat{A})\), and also includes Stieltjes functions such as \(f(z) = z^{-\alpha}\) with \(\alpha > 0\); see also~\cite{Frommer_Guettel_Schweitzer_2014}.
Then
\begin{equation}
\label{eq:fA_b_integral_representation}
    f(\Mat{A})\Mat{b}
    =
    \int_{\Gamma} g(t)(t\Mat{I} - \Mat{A})^{-1}\Mat{b} \diff t
    =
    \int_{\Gamma} g(t)\Mat{x}(t) \diff t,
\end{equation}
where \(\Mat{x}(t)\) solves the shifted linear system
\begin{equation}
\label{eq:shifted_linear_system}
    (t\Mat{I} - \Mat{A})\Mat{x}(t)
    =
    \Mat{b}.
\end{equation}

Let
\(\Kry_{m}(\Mat{A}, \Mat{b})
=
\myspan\{\Mat{b}, \Mat{A}\Mat{b}, \dotsc, \Mat{A}^{m - 1}\Mat{b}\}\)
be the Krylov subspace generated by \(\Mat{A}\) and \(\Mat{b}\).
Throughout this paper, we assume that the Krylov subspaces are nondegenerate, i.e.,
\(\dim \Kry_{j}(\Mat{A}, \Mat{b}) = j\) for \(1 \leq j \leq m\).
An associated \emph{Arnoldi-like decomposition} is
\begin{equation} \label{eq:arnoldi_like_decomp}
    \begin{aligned}
        \Mat{A}\Basis_{m}
        &=
        \Basis_{m}\Hessen_{m}
        +
        \basis_{m + 1}\hessen_{m + 1, m}\Mat{e}_{m}^{\trans}
        \\
        &=
        [\Basis_{m} \ \basis_{m + 1}]
        \begin{bmatrix}
            \Hessen_{m}
            \\
            \hessen_{m + 1, m}\Mat{e}_{m}^{\trans}
        \end{bmatrix}
        \eqcolon
        \Basis_{m + 1}\uHessen_{m},
    \end{aligned}
\end{equation}
where
\(\Basis_{m} = [\basis_{1} \ \cdots \ \basis_{m}] \in \complex^{N \times m}\)
is a basis of \(\Kry_{m}(\Mat{A}, \Mat{b})\), and
\(\Hessen_{m} \in \complex^{m \times m}\) is upper Hessenberg.
We assume that \(\Mat{b} = \basis_{1}\beta\) for some \(\beta \in \complex\).
When \(\Basis_{m + 1}\) is orthonormal, \eqref{eq:arnoldi_like_decomp} reduces to the standard \emph{Arnoldi decomposition}, which can be computed by the Arnoldi process~\cite{Saad_2003}.

\subsection{Krylov subspace methods}
\label{subsec:krylov}

In Krylov subspace methods, the solution \(\Mat{x}(t)\) of the shifted linear system~\eqref{eq:shifted_linear_system} is approximated in \(\Kry_{m}(\Mat{A}, \Mat{b})\) by
\(\Mat{x}_{m}(t) = \Basis_{m}\Mat{y}_{m}(t)\), where \(\Mat{y}_{m}(t) \in \complex^{m}\).
The corresponding residual is
\(\Mat{r}_{m}(t) = \Mat{b} - (t\Mat{I} - \Mat{A})\Mat{x}_{m}(t)\).
The coefficient vector \(\Mat{y}_{m}(t)\) is determined by imposing a \emph{Petrov--Galerkin condition} on \(\Mat{r}_{m}(t)\).

If the projection condition is
\(\Mat{r}_{m}(t) \perp \Kry_{m}(\Mat{A}, \Mat{b})\), then
\(\Mat{x}_{m}(t)\) is the Arnoldi approximation to the solution of the shifted linear system~\eqref{eq:shifted_linear_system}.
Solving the resulting reduced system gives
\begin{equation*}
    \Mat{y}_{m}(t)
    =
    \bigl(t\Mat{I}_{m} - \Basis_{m}^{\pinv}\Mat{A}\Basis_{m}\bigr)^{-1}
    \Mat{e}_{1}\beta.
\end{equation*}
Replacing \(\Mat{x}(t)\) in~\eqref{eq:fA_b_integral_representation} by \(\Mat{x}_{m}(t)\), we obtain
\begin{equation}
\label{eq:arnoldi_approx}
    \begin{aligned}
        \Mat{f}_{m}
        &=
        \int_{\Gamma} g(t)\Mat{x}_{m}(t) \diff t
        =
        \Basis_{m}
        \biggl(
            \int_{\Gamma}
            g(t)
            \bigl(t\Mat{I}_{m} - \Basis_{m}^{\pinv}\Mat{A}\Basis_{m}\bigr)^{-1}
            \diff t
        \biggr)
        \Mat{e}_{1}\beta
        \\
        &=
        \Basis_{m}
        f\bigl(\Basis_{m}^{\pinv}\Mat{A}\Basis_{m}\bigr)
        \Mat{e}_{1}\beta,
    \end{aligned}
\end{equation}
provided that the corresponding matrix function is well-defined.
The vector \(\Mat{f}_{m}\) in~\eqref{eq:arnoldi_approx} is called the \emph{Arnoldi approximation} to \(f(\Mat{A})\Mat{b}\).
If \(\basis_{m + 1} \perp \Basis_{m}\) in the Arnoldi-like decomposition~\eqref{eq:arnoldi_like_decomp}, then
\begin{equation}
\label{eq:arnoldi_approx_last_orth}
    \Basis_{m}^{\pinv}\Mat{A}\Basis_{m}
    =
    \Hessen_{m}
    \quad \text{and} \quad
    \Mat{f}_{m}
    =
    \Basis_{m}f(\Hessen_{m})\Mat{e}_{1}\beta.
\end{equation}
Thus, under this orthogonality condition, the Arnoldi approximation can be computed directly from the upper Hessenberg matrix \(\Hessen_{m}\).
This condition holds, in particular, when \(\Basis_{m + 1}\) is generated by the standard Arnoldi process.

If the projection condition is
\(\Mat{r}_{m}(t) \perp \Mat{A}\Kry_{m}(\Mat{A}, \Mat{b})\), then
\(\Mat{x}_{m}(t)\) is the harmonic Arnoldi approximation to the solution of the shifted linear system~\eqref{eq:shifted_linear_system}.
By a similar argument,
\begin{equation*}
    \Mat{y}_{m}(t)
    =
    \Bigl(
        t\Mat{I}_{m}
        -
        \bigl((\Mat{A}\Basis_{m})^{\herm}\Basis_{m}\bigr)^{-1}
        (\Mat{A}\Basis_{m})^{\herm}(\Mat{A}\Basis_{m})
    \Bigr)^{-1}
    \Mat{e}_{1}\beta.
\end{equation*}
The corresponding \emph{harmonic Arnoldi~(HmArnoldi) approximation} to \(f(\Mat{A})\Mat{b}\) is
\begin{equation}
\label{eq:hm_arnoldi_approx}
    \Mat{f}_{m}
    =
    \Basis_{m}
    f\Bigl(
        \bigl((\Mat{A}\Basis_{m})^{\herm}\Basis_{m}\bigr)^{-1}
        (\Mat{A}\Basis_{m})^{\herm}(\Mat{A}\Basis_{m})
    \Bigr)
    \Mat{e}_{1}\beta,
\end{equation}
provided that the corresponding matrix function is well-defined.
If \(\basis_{m + 1} \perp \Mat{A}\Basis_{m}\) in the Arnoldi-like decomposition~\eqref{eq:arnoldi_like_decomp}, then
\begin{equation}
\label{eq:hm_arnoldi_approx_last_hm_orth}
    \bigl((\Mat{A}\Basis_{m})^{\herm}\Basis_{m}\bigr)^{-1}
    (\Mat{A}\Basis_{m})^{\herm}(\Mat{A}\Basis_{m})
    =
    \Hessen_{m}
    \quad \text{and} \quad
    \Mat{f}_{m}
    =
    \Basis_{m}f(\Hessen_{m})\Mat{e}_{1}\beta.
\end{equation}
Thus, under this orthogonality condition, the harmonic Arnoldi approximation has the same form as the Arnoldi approximation in~\eqref{eq:arnoldi_approx_last_orth}.

\subsection{Sketched Krylov subspace methods}
\label{subsec:skrylov}

In the Krylov subspace methods introduced in Section~\ref{subsec:krylov}, the shifted linear system~\eqref{eq:shifted_linear_system} is approximated by imposing a Petrov--Galerkin condition on the residual.
This procedure can be accelerated using the so-called \emph{sketch-and-solve} approach~\cite{Woodruff_2014}.
More specifically, suppose that, for a fixed constant \(\varepsilon \in [0,1)\), the matrix \(\Mat{S} \in \complex^{s \times N}\) satisfies the following \emph{subspace embedding property} for all \(\Mat{v} \in \Kry_{m + 1}(\Mat{A}, \Mat{b})\):
\begin{equation}
\label{eq:subspace_embedding_property}
    (1 - \varepsilon)\|\Mat{v}\|^{2}
    \leq
    \|\Mat{S}\Mat{v}\|^{2}
    \leq
    (1 + \varepsilon)\|\Mat{v}\|^{2}.
\end{equation}
Condition~\eqref{eq:subspace_embedding_property} can equivalently be expressed in terms of inner products: for all \(\Mat{u}, \Mat{v} \in \Kry_{m + 1}(\Mat{A}, \Mat{b})\),
\begin{equation}
\label{eq:subspace_embedding_property_inner_product}
    \bigl|
        \langle \Mat{u}, \Mat{v} \rangle
        -
        \langle \Mat{S}\Mat{u}, \Mat{S}\Mat{v} \rangle
    \bigr|
    \leq
    \varepsilon\|\Mat{u}\|\|\Mat{v}\|.
\end{equation}
Indeed, let \(\orthBasis_{m + 1} \in \complex^{N \times (m + 1)}\) have orthonormal columns spanning \(\Kry_{m + 1}(\Mat{A}, \Mat{b})\), and define
\(\Mat{E}
=
\orthBasis_{m + 1}^{\herm}
\bigl(\Mat{I} - \Mat{S}^{\herm}\Mat{S}\bigr)
\orthBasis_{m + 1}\).
The matrix \(\Mat{E}\) is Hermitian.
For any \(\Mat{z} \in \complex^{m + 1}\), the vector \(\Mat{v} = \orthBasis_{m + 1}\Mat{z}\) belongs to \(\Kry_{m + 1}(\Mat{A}, \Mat{b})\), and~\eqref{eq:subspace_embedding_property} gives
\begin{equation*}
    \bigl|
        \Mat{z}^{\herm}\Mat{E}\Mat{z}
    \bigr|
    =
    \bigl|
        \|\Mat{v}\|^{2}
        -
        \|\Mat{S}\Mat{v}\|^{2}
    \bigr|
    \leq
    \varepsilon\|\Mat{v}\|^{2}
    =
    \varepsilon\|\Mat{z}\|^{2}.
\end{equation*}
Since \(\Mat{E}\) is Hermitian, it follows that
\(\|\Mat{E}\|
=
\max_{\|\Mat{z}\| = 1}
\bigl|
\Mat{z}^{\herm}\Mat{E}\Mat{z}
\bigr|
\leq
\varepsilon\).
Now, for any \(\Mat{u}, \Mat{v} \in \Kry_{m + 1}(\Mat{A}, \Mat{b})\), write
\(\Mat{u} = \orthBasis_{m + 1}\Mat{x}\) and
\(\Mat{v} = \orthBasis_{m + 1}\Mat{y}\).
Then
\begin{equation*}
    \bigl|
        \langle \Mat{u}, \Mat{v} \rangle
        -
        \langle \Mat{S}\Mat{u}, \Mat{S}\Mat{v} \rangle
    \bigr|
    =
    \bigl|
        \Mat{x}^{\herm}\Mat{E}\Mat{y}
    \bigr|
    \leq
    \|\Mat{E}\|\|\Mat{x}\|\|\Mat{y}\|
    \leq
    \varepsilon\|\Mat{u}\|\|\Mat{v}\|,
\end{equation*}
which proves~\eqref{eq:subspace_embedding_property_inner_product}.
Conversely, taking \(\Mat{u} = \Mat{v}\) in~\eqref{eq:subspace_embedding_property_inner_product} yields~\eqref{eq:subspace_embedding_property}.

We call such a matrix \(\Mat{S}\) a \emph{sketching matrix}, and \(s\) the sketching dimension.
Two vectors \(\Mat{x}, \Mat{y} \in \complex^{N}\) are called \(\Mat{S}\)-orthogonal, denoted by \(\Mat{x} \perp_{\Mat{S}} \Mat{y}\), if \(\Mat{S}\Mat{x} \perp \Mat{S}\Mat{y}\).
When \(\Mat{S}\) is chosen randomly~\cite{Martinsson_Tropp_2020}, for example, as a Gaussian or sparse random matrix, the subspace embedding property~\eqref{eq:subspace_embedding_property} holds with high probability, provided that \(s\) is a small multiple of \(m\), such as \(s = 2m\).

We first consider the sketched version of the Arnoldi approximation.
Let \(\hMat{x}_{m}(t) = \Basis_{m}\hMat{y}_{m}(t)\) approximate the solution \(\Mat{x}(t)\) of~\eqref{eq:shifted_linear_system}, and define the corresponding residual by
\(\hMat{r}_{m}(t) = \Mat{b} - (t\Mat{I} - \Mat{A})\hMat{x}_{m}(t)\).
The Petrov--Galerkin condition is imposed after sketching, namely,
\(\hMat{r}_{m}(t) \perp_{\Mat{S}} \Kry_{m}(\Mat{A}, \Mat{b})\).
By the subspace embedding property~\eqref{eq:subspace_embedding_property}, \(\Mat{S}\Basis_{m}\) has full column rank.
Following the same argument as in Section~\ref{subsec:krylov}, we obtain
\begin{equation*}
    \hMat{y}_{m}(t)
    =
    \bigl(
        t\Mat{I}_{m}
        -
        (\Mat{S}\Basis_{m})^{\pinv}
        (\Mat{S}\Mat{A}\Basis_{m})
    \bigr)^{-1}
    \Mat{e}_{1}\beta
\end{equation*}
and
\begin{equation}
\label{eq:s_arnoldi_approx}
    \hMat{f}_{m}
    =
    \Basis_{m}
    f\bigl(
        (\Mat{S}\Basis_{m})^{\pinv}
        (\Mat{S}\Mat{A}\Basis_{m})
    \bigr)
    \Mat{e}_{1}\beta,
\end{equation}
provided that the corresponding matrix function is well-defined.
The vector \(\hMat{f}_{m}\) in~\eqref{eq:s_arnoldi_approx} is called the \emph{sketched Arnoldi~(sArnoldi) approximation} to \(f(\Mat{A})\Mat{b}\).
We point out that the sArnoldi approximation~\eqref{eq:s_arnoldi_approx} used here differs in form from that in~\cite{Guettel_Schweitzer_2023}, although the two formulations are mathematically equivalent.
Moreover, if \(\basis_{m + 1} \perp_{\Mat{S}} \Basis_{m}\) in the Arnoldi-like decomposition~\eqref{eq:arnoldi_like_decomp}, then
\begin{equation}
\label{eq:s_arnoldi_approx_last_sorth}
    (\Mat{S}\Basis_{m})^{\pinv}
    (\Mat{S}\Mat{A}\Basis_{m})
    =
    \Hessen_{m}
    \quad \text{and} \quad
    \hMat{f}_{m}
    =
    \Basis_{m}f(\Hessen_{m})\Mat{e}_{1}\beta.
\end{equation}

A similar discussion applies to the HmArnoldi approximation.
By imposing the condition
\(\hMat{r}_{m}(t) \perp_{\Mat{S}} \Mat{A}\Kry_{m}(\Mat{A}, \Mat{b})\), we obtain
\begin{equation*}
    \hMat{y}_{m}(t)
    =
    \Bigl(
        t\Mat{I}_{m}
        -
        \bigl(
            (\Mat{S}\Mat{A}\Basis_{m})^{\herm}
            (\Mat{S}\Basis_{m})
        \bigr)^{-1}
        (\Mat{S}\Mat{A}\Basis_{m})^{\herm}
        (\Mat{S}\Mat{A}\Basis_{m})
    \Bigr)^{-1}
    \Mat{e}_{1}\beta.
\end{equation*}
Consequently, \(\hMat{x}_{m}(t) = \Basis_{m}\hMat{y}_{m}(t)\) is the sketched harmonic Arnoldi approximation to the solution of the shifted linear system~\eqref{eq:shifted_linear_system}.
The corresponding \emph{sketched harmonic Arnoldi~(sHmArnoldi) approximation} to \(f(\Mat{A})\Mat{b}\) is
\begin{equation}
\label{eq:s_hm_arnoldi_approx}
    \hMat{f}_{m}
    =
    \Basis_{m}
    f\Bigl(
        \bigl(
            (\Mat{S}\Mat{A}\Basis_{m})^{\herm}
            (\Mat{S}\Basis_{m})
        \bigr)^{-1}
        (\Mat{S}\Mat{A}\Basis_{m})^{\herm}
        (\Mat{S}\Mat{A}\Basis_{m})
    \Bigr)
    \Mat{e}_{1}\beta,
\end{equation}
provided that the corresponding matrix function is well-defined.
If \(\basis_{m + 1} \perp_{\Mat{S}} \Mat{A}\Basis_{m}\) in the Arnoldi-like decomposition~\eqref{eq:arnoldi_like_decomp}, then
\begin{equation}
\label{eq:s_hm_arnoldi_approx_last_shm_orth}
    \bigl(
        (\Mat{S}\Mat{A}\Basis_{m})^{\herm}
        (\Mat{S}\Basis_{m})
    \bigr)^{-1}
    (\Mat{S}\Mat{A}\Basis_{m})^{\herm}
    (\Mat{S}\Mat{A}\Basis_{m})
    =
    \Hessen_{m}
    \quad \text{and} \quad
    \hMat{f}_{m}
    =
    \Basis_{m}f(\Hessen_{m})\Mat{e}_{1}\beta.
\end{equation}

\subsection{Rank-\(1\) update of the Arnoldi-like decomposition}
\label{subsec:rank1_update}

The discussions in Sections~\ref{subsec:krylov} and~\ref{subsec:skrylov} show that enforcing a suitable orthogonality condition on the last vector \(\basis_{m + 1}\) can simplify the approximation formula by reducing it to a matrix function of the upper Hessenberg matrix \(\Hessen_{m}\).
This condition is not always satisfied by a given Arnoldi-like decomposition.
It can, however, be enforced by applying a rank-\(1\) update to~\eqref{eq:arnoldi_like_decomp}, as described below; see also~\cite{Cortinovis_Kressner_Nakatsukasa_2024, Grigori_Kressner_Shao_Simunec_2026} for related approaches.

Given~\eqref{eq:arnoldi_like_decomp}, we update \(\basis_{m + 1}\) to \(\updatedbasis_{m + 1}\) so that \(\updatedbasis_{m + 1}\) satisfies a prescribed orthogonality condition.
To this end, write
\(\basis_{m + 1}
=
\Basis_{m}\Mat{c}_{m}
+
\updatedbasis_{m + 1}\alpha_{m + 1}\),
where \(\Mat{c}_{m}\) is to be determined and \(\alpha_{m + 1}\) is chosen according to a normalization condition.
Substituting this expression into~\eqref{eq:arnoldi_like_decomp} gives
\begin{equation*}
    \begin{aligned}
        \Mat{A}\Basis_{m}
        &=
        \Basis_{m}\Hessen_{m}
        +
        \bigl(
            \Basis_{m}\Mat{c}_{m}
            +
            \updatedbasis_{m + 1}\alpha_{m + 1}
        \bigr)
        \hessen_{m + 1, m}\Mat{e}_{m}^{\trans}
        \\
        &=
        \Basis_{m}
        \bigl(
            \Hessen_{m}
            +
            \Mat{c}_{m}\hessen_{m + 1, m}\Mat{e}_{m}^{\trans}
        \bigr)
        +
        \updatedbasis_{m + 1}
        \bigl(
            \alpha_{m + 1}\hessen_{m + 1, m}
        \bigr)
        \Mat{e}_{m}^{\trans}
        \\
        &=
        \Basis_{m}\updatedHessen_{m}
        +
        \updatedbasis_{m + 1}
        \updatedhessen_{m + 1, m}
        \Mat{e}_{m}^{\trans},
    \end{aligned}
\end{equation*}
where
\(\updatedHessen_{m}
=
\Hessen_{m}
+
\Mat{c}_{m}\hessen_{m + 1, m}\Mat{e}_{m}^{\trans}\)
and
\(\updatedhessen_{m + 1, m}
=
\alpha_{m + 1}\hessen_{m + 1, m}\).
The matrix \(\updatedHessen_{m}\) remains upper Hessenberg.

If the desired orthogonality condition is
\(\updatedbasis_{m + 1} \perp \Basis_{m}\) or
\(\updatedbasis_{m + 1} \perp_{\Mat{S}} \Basis_{m}\), then
\(\Mat{c}_{m} = \Basis_{m}^{\pinv}\basis_{m + 1}\) or
\(\Mat{c}_{m} = (\Mat{S}\Basis_{m})^{\pinv}(\Mat{S}\basis_{m + 1})\),
respectively.
Similarly, if the desired orthogonality condition is
\(\updatedbasis_{m + 1} \perp \Mat{A}\Basis_{m}\) or
\(\updatedbasis_{m + 1} \perp_{\Mat{S}} \Mat{A}\Basis_{m}\), then
\begin{eqnarray*}
    \Mat{c}_{m}
    &=&
    \bigl(
        (\Mat{A}\Basis_{m})^{\herm}\Basis_{m}
    \bigr)^{-1}
    (\Mat{A}\Basis_{m})^{\herm}\basis_{m + 1}\\
   \text{or}\quad  \Mat{c}_{m}
    &=&
    \bigl(
        (\Mat{S}\Mat{A}\Basis_{m})^{\herm}
        (\Mat{S}\Basis_{m})
    \bigr)^{-1}
    (\Mat{S}\Mat{A}\Basis_{m})^{\herm}
    (\Mat{S}\basis_{m + 1}),
\end{eqnarray*}
respectively.

\section{Restarting based on quadrature}
\label{sec:quad_restarting}

In this section, we extend quadrature-based restarting to the Arnoldi-like framework introduced in Section~\ref{sec:krylov_skrylov}.
Our extension builds on the restarting mechanism of~\cite{Frommer_Guettel_Schweitzer_2014}.
This extension is central to the sketch-and-restart framework: any approximation of the form \(\Basis_{m}f(\Hessen_{m})\Mat{e}_{1}\beta\) admits an integral error representation, which can then be used for restarting.
The key ingredient is the integral representation of the interpolating polynomial, stated in the following lemma~\cite[Lemma~3.1]{Frommer_Guettel_Schweitzer_2014}.

\begin{lemma}[Integral representation of the interpolating polynomial]
\label{lem:interp_poly_integral}
    Let \(\Omega \subset \complex\) be a domain, and let \(f\colon \Omega \to \complex\) be analytic on the closure of \(\Omega\) and admit the integral representation~\eqref{eq:f_integral_representation}.
    Then the polynomial \(p_{m - 1} \in \poly_{m - 1}\) interpolating \(f\) at the points \(\{\theta_{1}, \dotsc, \theta_{m}\} \subset \Omega\) is given by
    \begin{equation}
    \label{eq:interpolating_poly_integral}
        p_{m - 1}(z)
        =
        \int_{\Gamma}
        \biggl(
            1 - \frac{\phi_{m}(z)}{\phi_{m}(t)}
        \biggr)
        \frac{g(t)}{t - z}
        \diff t,
    \end{equation}
    where \(\phi_{m}(z) = (z - \theta_{1})\dotsm(z - \theta_{m})\), provided that the integral in~\eqref{eq:interpolating_poly_integral} exists.
\end{lemma}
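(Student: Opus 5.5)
The plan is to define \(q(z)\) as the right-hand side of \eqref{eq:interpolating_poly_integral} and check two things: that \(q\) lies in \(\poly_{m-1}\), and that \(q(\theta_{j}) = f(\theta_{j})\) for \(j = 1, \dotsc, m\). Uniqueness of the interpolating polynomial of degree at most \(m-1\) at the \(m\) points (counted with multiplicity; see below) then gives \(q = p_{m-1}\).

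For the polynomial property, I will split the integrand as
\begin{equation*}
    \biggl(1 - \frac{\phi_{m}(z)}{\phi_{m}(t)}\biggr)\frac{g(t)}{t - z}
    =
    \frac{\phi_{m}(t) - \phi_{m}(z)}{t - z}\,\frac{g(t)}{\phi_{m}(t)}.
\end{equation*}
The difference quotient \((\phi_{m}(t) - \phi_{m}(z))/(t - z)\) is a polynomial in \(z\) of degree \(m-1\) whose coefficients are polynomials in \(t\). Concretely, writing \(\phi_{m}(z) = \sum_{k} a_{k} z^{k}\), it equals \(\sum_{k=1}^{m} a_{k}\sum_{i=0}^{k-1} t^{k-1-i} z^{i}\). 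Integrating term by term therefore gives
\begin{equation*}
    q(z) = \sum_{i=0}^{m-1} z^{i}\, c_{i},
    \qquad
    c_{i} = \int_{\Gamma} \pi_{i}(t)\,\frac{g(t)}{\phi_{m}(t)} \diff t,
\end{equation*}
with polynomials \(\pi_{i}\) of degree at most \(m-1-i\). This step requires the integrals \(c_{i}\) to exist. That is the main delicate point, since for unbounded \(\Gamma\) (the Stieltjes case) it needs decay of \(g(t)t^{k}/\phi_{m}(t)\). I will use the hypothesis that the integral in \eqref{eq:interpolating_poly_integral} exists: the \(c_{i}\) can be recovered as linear combinations of \(q\) evaluated at \(m\) distinct points (Lagrange/Vandermonde inversion), which shows existence of these coefficient integrals. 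Alternatively, I would simply interpret the proviso as covering these finitely many moment integrals.

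For the interpolation property, take \(z = \theta_{j} \in \Omega\). Since \(\phi_{m}(\theta_{j}) = 0\), the factor \(1 - \phi_{m}(z)/\phi_{m}(t)\) equals \(1\). Hence \(q(\theta_{j}) = \int_{\Gamma} g(t)/(t - \theta_{j}) \diff t = f(\theta_{j})\) by \eqref{eq:f_integral_representation}. Note that \(\phi_{m}(t) \neq 0\) on \(\Gamma\) because \(\Gamma \subset \complex \setminus \Omega\) while all \(\theta_{j} \in \Omega\).

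If the nodes have multiplicities, I will handle Hermite interpolation by differentiating under the integral sign. The function \(f - q\) equals \(\phi_{m}(z)\int_{\Gamma} g(t)/(\phi_{m}(t)(t - z)) \diff t\), and the integral is analytic in \(z \in \Omega\). So \(f - q\) vanishes at each \(\theta_{j}\) to the order of its multiplicity, which is exactly the Hermite interpolation condition. This identity is really the cleanest route overall: \(f - q = \phi_{m}\cdot h\) with \(h\) analytic, and \(q \in \poly_{m-1}\), together characterize the interpolant.
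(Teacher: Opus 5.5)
Your proof is correct and is essentially the argument behind \cite[Lemma~3.1]{Frommer_Guettel_Schweitzer_2014}, which the paper cites rather than proves: for each fixed \(t \in \Gamma\), \(\bigl(1 - \phi_{m}(z)/\phi_{m}(t)\bigr)/(t - z)\) is the (Hermite) interpolant of \((t - z)^{-1}\) at the nodes (the paper reuses this fact in the proof of Lemma~\ref{lem:shm_arnoldi_res}), and integrating against \(g\) gives the interpolant of \(f\) by linearity. You carry out the same verification after integrating instead of kernel-wise, which usefully makes explicit that the coefficient integrals exist (your Vandermonde argument, applied to the integrands) and that the Hermite conditions follow from \(f - q = \phi_{m}h\), where the analyticity of \(h\) near each \(\theta_{j}\) follows by dominated convergence: \(|t - z|\) is bounded below on \(\Gamma\) for \(z\) near \(\theta_{j}\), and \(g/\phi_{m}\) is integrable because it is the integrand of \(c_{m-1}\).
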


\subsection{Restarting the Arnoldi-like approximation}
\label{subsec:restart_arnoldi_like_approx}

We first recall the following polynomial identity for Arnoldi-like decompositions~\cite[Lemma~2.1]{Paige_Parlett_van_der_Vorst_1995}.

\begin{lemma}[Polynomial action in an Arnoldi-like decomposition]
\label{lem:poly_A_b1_arnoldi_like}
    Given an Arnoldi-like decomposition~\eqref{eq:arnoldi_like_decomp}, let
    \(p(z) = c_{0} + c_{1}z + \dotsb + c_{m}z^{m} \in \poly_{m}\).
    Then
    \begin{equation*}
        p(\Mat{A})\basis_{1}
        =
        \Basis_{m}p(\Hessen_{m})\Mat{e}_{1}
        +
        \basis_{m + 1}\gamma_{m}c_{m},
    \end{equation*}
    where \(\gamma_{m} = \prod_{j = 1}^{m}\hessen_{j + 1, j}\).
\end{lemma}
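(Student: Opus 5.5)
By linearity in the coefficients of \(p\), it suffices to prove the statement for monomials. The plan is to show by induction on \(j\) that
\begin{equation*}
    \Mat{A}^{j}\basis_{1} = \Basis_{m}\Hessen_{m}^{j}\Mat{e}_{1}, \qquad 0 \leq j \leq m - 1,
\end{equation*}
and
\begin{equation*}
    \Mat{A}^{m}\basis_{1} = \Basis_{m}\Hessen_{m}^{m}\Mat{e}_{1} + \basis_{m + 1}\gamma_{m}.
\end{equation*}
Multiplying these identities by \(c_{j}\) and summing over \(j\) then gives the claim, since only the top coefficient \(c_{m}\) picks up the extra term.

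The key structural fact is that \(\Hessen_{m}\) is upper Hessenberg. Consequently, \(\Hessen_{m}^{j}\Mat{e}_{1}\) has nonzero entries only in its first \(j + 1\) positions. Moreover, its \((j + 1)\)st entry equals \(\prod_{i = 1}^{j}\hessen_{i + 1, i}\), which follows by its own short induction: multiplying by \(\Hessen_{m}\) pushes the last nonzero entry down by one position and scales it by the subdiagonal entry. In particular, \(\Mat{e}_{m}^{\trans}\Hessen_{m}^{j}\Mat{e}_{1} = 0\) for \(j \leq m - 2\), and \(\Mat{e}_{m}^{\trans}\Hessen_{m}^{m - 1}\Mat{e}_{1} = \prod_{i = 1}^{m - 1}\hessen_{i + 1, i}\).

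For the induction step, assume \(\Mat{A}^{j}\basis_{1} = \Basis_{m}\Hessen_{m}^{j}\Mat{e}_{1}\) for some \(j \leq m - 1\). Applying \(\Mat{A}\) and using~\eqref{eq:arnoldi_like_decomp} gives
\begin{equation*}
    \Mat{A}^{j + 1}\basis_{1}
    = \Basis_{m}\Hessen_{m}^{j + 1}\Mat{e}_{1}
    + \basis_{m + 1}\hessen_{m + 1, m}\Mat{e}_{m}^{\trans}\Hessen_{m}^{j}\Mat{e}_{1}.
\end{equation*}
If \(j \leq m - 2\), the last term vanishes, and the induction continues. If \(j = m - 1\), the last term equals \(\basis_{m + 1}\hessen_{m + 1, m}\prod_{i = 1}^{m - 1}\hessen_{i + 1, i} = \basis_{m + 1}\gamma_{m}\). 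The base case \(j = 0\) is trivial.

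The only step needing care is the sparsity-pattern claim for \(\Hessen_{m}^{j}\Mat{e}_{1}\), including the exact value of its last nonzero entry. This is a routine induction using \(\hessen_{k, i} = 0\) for \(k > i + 1\). No orthogonality of \(\Basis_{m + 1}\) is needed, which is why the lemma holds for general Arnoldi-like decompositions.
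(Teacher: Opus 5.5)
Your proof is correct and complete. The monomial induction uses the fact that $\Mat{e}_{m}^{\trans}\Hessen_{m}^{j}\Mat{e}_{1}$ vanishes for $j \leq m-2$ and equals $\prod_{i=1}^{m-1}\hessen_{i+1,i}$ for $j = m-1$; this is the standard argument behind the result the paper cites (\cite[Lemma~2.1]{Paige_Parlett_van_der_Vorst_1995}) in place of giving its own proof, so your approach matches.
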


The \emph{Arnoldi-like approximation} to \(f(\Mat{A})\Mat{b}\) is defined by
\begin{equation}
\label{eq:arnoldi_like_approx}
    \tMat{f}_{m}
    \coloneq
    \Basis_{m}f(\Hessen_{m})\Mat{e}_{1}\beta.
\end{equation}
Let \(p_{m - 1} \in \poly_{m - 1}\) interpolate \(f\) at \(\spec(\Hessen_{m})\).
By Lemma~\ref{lem:poly_A_b1_arnoldi_like},
\begin{equation*}
    \tMat{f}_{m}
    =
    \Basis_{m}p_{m - 1}(\Hessen_{m})\Mat{e}_{1}\beta
    =
    p_{m - 1}(\Mat{A})\basis_{1}\beta
    =
    p_{m - 1}(\Mat{A})\Mat{b}.
\end{equation*}
Suppose that \(\spec(\Hessen_{m}) = \{\theta_{1}, \dotsc, \theta_{m}\}\), and define
\(\phi_{m}(z) = (z - \theta_{1})\dotsm(z - \theta_{m})\).
The eigenvalues are counted with algebraic multiplicity; in the case of multiple eigenvalues, the interpolation is understood in the Hermite sense.
Then Lemma~\ref{lem:interp_poly_integral} gives
\begin{equation*}
    f(z) - p_{m - 1}(z)
    =
    \int_{\Gamma}
    \frac{\phi_{m}(z)}{\phi_{m}(t)}
    \frac{g(t)}{t - z}
    \diff t,
\end{equation*}
and therefore
\begin{equation*}
    f(\Mat{A})\Mat{b} - \tMat{f}_{m}
    =
    \biggl(
        \int_{\Gamma}
        \frac{g(t)}{\phi_{m}(t)}
        (t\Mat{I} - \Mat{A})^{-1}
        \diff t
    \biggr)
    \phi_{m}(\Mat{A})\Mat{b}.
\end{equation*}
Moreover, Lemma~\ref{lem:poly_A_b1_arnoldi_like} gives
\begin{equation*}
    \phi_{m}(\Mat{A})\Mat{b}
    =
    \Basis_{m}\phi_{m}(\Hessen_{m})\Mat{e}_{1}\beta
    +
    \basis_{m + 1}\gamma_{m}\beta
    =
    \basis_{m + 1}\gamma_{m}\beta,
\end{equation*}
where we used \(\phi_{m}(\Hessen_{m}) = \Mat{0}\).
Thus,
\begin{equation*}
    f(\Mat{A})\Mat{b} - \tMat{f}_{m}
    =
    \gamma_{m}\beta
    \biggl(
        \int_{\Gamma}
        \frac{g(t)}{\phi_{m}(t)}
        (t\Mat{I} - \Mat{A})^{-1}
        \diff t
    \biggr)
    \basis_{m + 1}.
\end{equation*}
The following theorem generalizes Theorem~3.4 of~\cite{Frommer_Guettel_Schweitzer_2014}.

\begin{theorem}[Error of the Arnoldi-like approximation]
\label{thm:err_arnoldi_like_approx}
    Consider the Arnoldi-like decomposition~\eqref{eq:arnoldi_like_decomp} with \(\Mat{b} = \basis_{1}\beta\).
    Suppose that \(f\) admits the integral representation~\eqref{eq:f_integral_representation}, and let \(\tMat{f}_{m}\) be the Arnoldi-like approximation defined in~\eqref{eq:arnoldi_like_approx}.
    Define \(\phi_{m}(t) = (t - \theta_{1})\dotsm(t - \theta_{m})\), where \(\spec(\Hessen_{m}) = \{\theta_{1}, \dotsc, \theta_{m}\} \subset \Omega\), and let \(\gamma_{m} = \prod_{j = 1}^{m}\hessen_{j + 1, j}\).
    Then
    \begin{equation}
    \label{eq:err_arnoldi_like_approx}
        f(\Mat{A})\Mat{b} - \tMat{f}_{m}
        =
        \gamma_{m}\beta
        \biggl(
            \int_{\Gamma}
            \frac{g(t)}{\phi_{m}(t)}
            (t\Mat{I} - \Mat{A})^{-1}
            \diff t
        \biggr)
        \basis_{m + 1}
        \eqcolon
        \errfunc(\Mat{A})\basis_{m + 1},
    \end{equation}
    provided that the integral exists.
    Thus, the error function is
    \begin{equation*}
        \errfunc(z)
        =
        \gamma_{m}\beta
        \int_{\Gamma}
        \frac{g(t)}{\phi_{m}(t)}
        (t - z)^{-1}
        \diff t.
    \end{equation*}
\end{theorem}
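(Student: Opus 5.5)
The argument is essentially the derivation sketched just before the statement; I would organize it into three steps.

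\emph{Step 1: the approximation is a polynomial applied to \(\Mat{b}\).} Let \(p_{m-1}\in\poly_{m-1}\) be the (Hermite) interpolant of \(f\) at \(\spec(\Hessen_{m})=\{\theta_{1},\dotsc,\theta_{m}\}\subset\Omega\), counted with algebraic multiplicity. Standard matrix function theory gives \(f(\Hessen_{m})=p_{m-1}(\Hessen_{m})\), hence \(\tMat{f}_{m}=\Basis_{m}p_{m-1}(\Hessen_{m})\Mat{e}_{1}\beta\). Since \(\deg p_{m-1}\le m-1\), its coefficient of \(z^{m}\) is zero. Lemma~\ref{lem:poly_A_b1_arnoldi_like} therefore yields \(\tMat{f}_{m}=p_{m-1}(\Mat{A})\basis_{1}\beta=p_{m-1}(\Mat{A})\Mat{b}\).

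\emph{Step 2: scalar error representation.} I apply Lemma~\ref{lem:interp_poly_integral} with the nodes \(\theta_{j}\). Subtracting \eqref{eq:interpolating_poly_integral} from \eqref{eq:f_integral_representation} gives
\begin{equation*}
f(z)-p_{m-1}(z)=\phi_{m}(z)\int_{\Gamma}\frac{g(t)}{\phi_{m}(t)}\frac{1}{t-z}\diff t,\qquad z\in\Omega,
\end{equation*}
valid whenever the integral exists. For repeated nodes, Lemma~\ref{lem:interp_poly_integral} is stated with \(\phi_{m}\) carrying multiplicities; the same formula then gives the Hermite interpolant, since the right-hand side vanishes to the correct order at each \(\theta_{j}\).

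\emph{Step 3: lift to matrices and use the decomposition.} Since \(\spec(\Mat{A})\subset\Omega\) and \(\Gamma\subset\complex\setminus\Omega\), the resolvent \((t\Mat{I}-\Mat{A})^{-1}\) is defined on \(\Gamma\). The identity from Step~2 holds as a matrix identity by the same resolvent-integral argument that gives \eqref{eq:fA_b_integral_representation}. Moreover \(\phi_{m}(\Mat{A})\) commutes with the resolvent, so
\begin{equation*}
f(\Mat{A})\Mat{b}-\tMat{f}_{m}=\biggl(\int_{\Gamma}\frac{g(t)}{\phi_{m}(t)}(t\Mat{I}-\Mat{A})^{-1}\diff t\biggr)\phi_{m}(\Mat{A})\Mat{b}.
\end{equation*}
Next I apply Lemma~\ref{lem:poly_A_b1_arnoldi_like} to the monic polynomial \(\phi_{m}\), whose leading coefficient is \(c_{m}=1\). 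This gives \(\phi_{m}(\Mat{A})\basis_{1}=\Basis_{m}\phi_{m}(\Hessen_{m})\Mat{e}_{1}+\basis_{m+1}\gamma_{m}\). By Cayley--Hamilton, \(\phi_{m}\) is the characteristic polynomial of \(\Hessen_{m}\), so \(\phi_{m}(\Hessen_{m})=\Mat{0}\). Multiplying by \(\beta\) and substituting gives \eqref{eq:err_arnoldi_like_approx}. The expression for \(\errfunc(z)\) is then read off by replacing \(\Mat{A}\) with a scalar \(z\).

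\emph{Main obstacle.} The delicate step is justifying that the scalar identity of Step~2 transfers to \(\Mat{A}\). The route I would try first is to write both \(f(\Mat{A})\) and \(p_{m-1}(\Mat{A})\) through the integral with kernel \(g(t)(t\Mat{I}-\Mat{A})^{-1}\); for \(p_{m-1}\) this uses the polynomial identity \(\bigl(1-\phi_{m}(z)/\phi_{m}(t)\bigr)/(t-z)\), which is a polynomial in \(z\). This requires \(\phi_{m}\) to have no zeros on \(\Gamma\), which holds because \(\theta_{j}\in\Omega\). Together with the assumed existence of the integral, this suffices.
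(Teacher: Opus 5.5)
Your proposal is correct and follows the paper's own derivation essentially step for step. You interpolate \(f\) at \(\spec(\Hessen_{m})\) and use Lemma~\ref{lem:poly_A_b1_arnoldi_like} to get \(\tMat{f}_{m}=p_{m-1}(\Mat{A})\Mat{b}\), subtract the integral representations via Lemma~\ref{lem:interp_poly_integral}, and apply Lemma~\ref{lem:poly_A_b1_arnoldi_like} again to the monic \(\phi_{m}\) using \(\phi_{m}(\Hessen_{m})=\Mat{0}\). Your added remarks on the Hermite case and on lifting the scalar identity to \(\Mat{A}\) through the polynomial kernel \((1-\phi_{m}(z)/\phi_{m}(t))/(t-z)\) spell out details the paper leaves implicit.
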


Using \(\basis_{m + 1}\) as the new starting vector, suppose that a new Arnoldi-like decomposition
\(\Mat{A}\Basis_{m}^{+} = \Basis_{m + 1}^{+}\uHessen_{m}^{+}\)
is generated with \(\basis_{m + 1} = \basis_{1}^{+}\beta^{+}\).
Then the error term can itself be approximated by an Arnoldi-like approximation:
\begin{equation*}
    \errfunc(\Mat{A})\basis_{m + 1}
    =
    \Basis_{m}^{+}\errfunc(\Hessen_{m}^{+})\Mat{e}_{1}\beta^{+}
    +
    \errfunc^{+}(\Mat{A})\basis_{m + 1}^{+},
\end{equation*}
where \(\errfunc^{+}(z)\) is the new error function.
Repeating this process yields the general restarting framework in Algorithm~\ref{alg:restart_arnoldi_like_framework}.
The corresponding error functions \(\errfunc^{[k]}(z)\) are given in Corollary~\ref{cor:err_arnoldi_like_approx_k_restart}.
Although the corollary assumes a fixed Krylov subspace dimension \(m\) in each restart cycle, the result extends directly to variable dimensions.

\begin{algorithm}[!htbp]
  \caption{Restarting framework for computing \(f(\Mat{A})\Mat{b}\)}
  \label{alg:restart_arnoldi_like_framework}
  \begin{algorithmic}[1]
    \REQUIRE{Matrix \(\Mat{A}\), vector \(\Mat{b}\), function \(f\), subspace dimension \(m\)}
    \ENSURE{\(\Mat{f}^{[k]} \approx f(\Mat{A})\Mat{b}\)}
    \STATE{Compute an Arnoldi-like decomposition
    \(\Mat{A}\Basis_{m}^{[0]} = \Basis_{m + 1}^{[0]}\uHessen_{m}^{[0]}\)
    with \(\Mat{b} = \basis_{1}^{[0]}\beta^{[0]}\).}
    \STATE{\(\Mat{f}^{[0]}
    =
    \Basis_{m}^{[0]}f(\Hessen_{m}^{[0]})\Mat{e}_{1}\beta^{[0]}\).}
    \FOR{\(k = 1, 2, \dotsc\)}
      \STATE{Determine the error function \(\errfunc^{[k]}(z)\).}
      \STATE{Compute an Arnoldi-like decomposition
      \(\Mat{A}\Basis_{m}^{[k]} = \Basis_{m + 1}^{[k]}\uHessen_{m}^{[k]}\)
      with \(\basis_{m + 1}^{[k - 1]} = \basis_{1}^{[k]}\beta^{[k]}\).}
      \STATE{Compute
      \(\Mat{g}^{[k]}
      \approx
      \Basis_{m}^{[k]}\errfunc^{[k]}(\Hessen_{m}^{[k]})\Mat{e}_{1}\beta^{[k]}\).}
      \STATE{Update
      \(\Mat{f}^{[k]} = \Mat{f}^{[k - 1]} + \Mat{g}^{[k]}\).}
    \ENDFOR
  \end{algorithmic}
\end{algorithm}

\begin{corollary}[Error of the Arnoldi-like approximation after \(k\) restarts]
\label{cor:err_arnoldi_like_approx_k_restart}
    Under the assumptions of Theorem~\ref{thm:err_arnoldi_like_approx}, suppose that the \(\ell\)-th cycle generates an Arnoldi-like decomposition
    \(\Mat{A}\Basis_{m}^{[\ell]}
    =
    \Basis_{m + 1}^{[\ell]}\uHessen_{m}^{[\ell]}\)
    for \(0 \leq \ell \leq k\).
    Here, \(\Mat{b} = \basis_{1}^{[0]}\beta^{[0]}\) and
    \(\basis_{m + 1}^{[\ell - 1]}
    =
    \basis_{1}^{[\ell]}\beta^{[\ell]}\)
    for \(1 \leq \ell \leq k\).
    Assume that
    \(\spec(\Hessen_{m}^{[\ell]})
    =
    \{\theta_{j}^{[\ell]}\}_{j = 1}^{m}
    \subset \Omega\), and define
    \(\phi_{m}^{[\ell]}(t)
    =
    \prod_{j = 1}^{m}(t - \theta_{j}^{[\ell]})\)
    and
    \(\gamma_{m}^{[\ell]}
    =
    \prod_{j = 1}^{m}\hessen_{j + 1, j}^{[\ell]}\)
    for \(0 \leq \ell \leq k\).
    Then the error after \(k\) restarts is
    \begin{equation*}
        \begin{aligned}
            f(\Mat{A})\Mat{b} - \Mat{f}^{[k]}
            &=
            \biggl(
                \prod_{\ell = 0}^{k}\gamma_{m}^{[\ell]}\beta^{[\ell]}
            \biggr)
            \biggl(
                \int_{\Gamma}
                \frac{g(t)}
                {\prod_{\ell = 0}^{k}\phi_{m}^{[\ell]}(t)}
                (t\Mat{I} - \Mat{A})^{-1}
                \diff t
            \biggr)
            \basis_{m + 1}^{[k]}
            \\
            &\eqcolon
            \errfunc^{[k + 1]}(\Mat{A})\basis_{m + 1}^{[k]}.
        \end{aligned}
    \end{equation*}
\end{corollary}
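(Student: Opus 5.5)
We prove the corollary by induction on \(k\). We assume that in Algorithm~\ref{alg:restart_arnoldi_like_framework} the update \(\Mat{g}^{[\ell]}\) is the exact Arnoldi-like approximation \(\Basis_{m}^{[\ell]}\errfunc^{[\ell]}(\Hessen_{m}^{[\ell]})\Mat{e}_{1}\beta^{[\ell]}\), and that \(\errfunc^{[\ell]}\) is defined recursively by the formula in the statement with \(k\) replaced by \(\ell - 1\). The base case \(k = 0\) is exactly Theorem~\ref{thm:err_arnoldi_like_approx}, since \(\Mat{f}^{[0]} = \Basis_{m}^{[0]}f(\Hessen_{m}^{[0]})\Mat{e}_{1}\beta^{[0]}\) and the product over \(\ell\) then has the single factor \(\ell = 0\).

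For the inductive step, assume the representation holds after \(k - 1\) restarts, so that \(f(\Mat{A})\Mat{b} - \Mat{f}^{[k-1]} = \errfunc^{[k]}(\Mat{A})\basis_{m+1}^{[k-1]}\). Here
\begin{equation*}
    \errfunc^{[k]}(z) = \int_{\Gamma}\frac{\tilde g(t)}{t - z}\diff t,
    \qquad
    \tilde g(t) = \Bigl(\prod_{\ell=0}^{k-1}\gamma_{m}^{[\ell]}\beta^{[\ell]}\Bigr)\frac{g(t)}{\prod_{\ell=0}^{k-1}\phi_{m}^{[\ell]}(t)}.
\end{equation*}
The key observation is that \(\errfunc^{[k]}\) itself has the form~\eqref{eq:f_integral_representation}, with \(g\) replaced by \(\tilde g\) on the same contour \(\Gamma\). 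It is analytic off \(\Gamma\), in particular on \(\Omega\). Indeed, the nodes \(\theta_{j}^{[\ell]}\) lie in \(\Omega\) and \(\Gamma \subset \complex\setminus\Omega\), so \(\phi_{m}^{[\ell]}(t) \neq 0\) on \(\Gamma\) and \(\tilde g\) is a suitable weight, provided the integrals exist.

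We then apply Theorem~\ref{thm:err_arnoldi_like_approx} to the function \(\errfunc^{[k]}\), the starting vector \(\basis_{m+1}^{[k-1]} = \basis_{1}^{[k]}\beta^{[k]}\), and the decomposition of cycle \(k\). This gives
\begin{equation*}
    \errfunc^{[k]}(\Mat{A})\basis_{m+1}^{[k-1]} - \Mat{g}^{[k]}
    =
    \gamma_{m}^{[k]}\beta^{[k]}\biggl(\int_{\Gamma}\frac{\tilde g(t)}{\phi_{m}^{[k]}(t)}(t\Mat{I}-\Mat{A})^{-1}\diff t\biggr)\basis_{m+1}^{[k]}.
\end{equation*}
Since \(\Mat{f}^{[k]} = \Mat{f}^{[k-1]} + \Mat{g}^{[k]}\), the left-hand side equals \(f(\Mat{A})\Mat{b} - \Mat{f}^{[k]}\). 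Substituting \(\tilde g\) and collecting the scalar factors \(\gamma_{m}^{[\ell]}\beta^{[\ell]}\) and the polynomials \(\phi_{m}^{[\ell]}\) then yields the claimed formula for \(k\).

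The only delicate point is justifying that Theorem~\ref{thm:err_arnoldi_like_approx}, and behind it Lemma~\ref{lem:interp_poly_integral}, applies to \(\errfunc^{[k]}\). This requires \(\errfunc^{[k]}\) to be analytic on the closure of \(\Omega\) with the weight \(\tilde g\) integrable on \(\Gamma\). The plan handles this through the nonvanishing of the \(\phi_{m}^{[\ell]}\) on \(\Gamma\), together with the standing proviso that the integrals exist. The assumption \(\spec(\Hessen_{m}^{[\ell]}) \subset \Omega\) is exactly what makes the relevant matrix functions well defined. No other obstacles are expected; the rest is bookkeeping.
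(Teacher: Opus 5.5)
Your proof is correct and follows the same route as the paper. The paper gets the corollary by noting that the error function \(\errfunc^{[k]}\) again has the form~\eqref{eq:f_integral_representation} (with weight \(g(t)\prod_{\ell}\gamma_{m}^{[\ell]}\beta^{[\ell]}/\prod_{\ell}\phi_{m}^{[\ell]}(t)\)) and then re-applying Theorem~\ref{thm:err_arnoldi_like_approx} cycle by cycle; your induction makes this recursion explicit, including the implicit assumptions that each \(\Mat{g}^{[k]}\) is computed exactly and that \(\phi_{m}^{[\ell]}\) does not vanish on \(\Gamma\).
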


In practice, the error functions are evaluated using adaptive quadrature.
For a given error function, we compute two quadrature approximations using \(r_{1}\) and \(r_{2}\) nodes, where \(r_{2} = 2r_{1}\).
The approximation based on \(r_{2}\) nodes is accepted if the corresponding reduced vectors differ by less than a prescribed tolerance.
Otherwise, we set \(r_{1} = r_{2}\), double \(r_{2}\), and repeat the test until the tolerance is satisfied or a prescribed maximum number of nodes is reached.
If no increase in the number of quadrature nodes is required in a given iteration, we reduce the quadrature sizes for the next iteration by setting \(r_{2} = r_{1}\) and halving \(r_{1}\).

\subsection{Restarting Krylov and sketched Krylov subspace methods}
\label{subsec:restart_krylov_skrylov}

The discussion above links the Arnoldi-like framework of Section~\ref{sec:krylov_skrylov} with the quadrature-based restarting mechanism.
Indeed, the Arnoldi approximation~\eqref{eq:arnoldi_approx_last_orth}, the HmArnoldi approximation~\eqref{eq:hm_arnoldi_approx_last_hm_orth}, the sArnoldi approximation~\eqref{eq:s_arnoldi_approx_last_sorth}, and the sHmArnoldi approximation~\eqref{eq:s_hm_arnoldi_approx_last_shm_orth} are all special cases of the Arnoldi-like approximation~\eqref{eq:arnoldi_like_approx}, provided that the corresponding orthogonality conditions are satisfied.
Thus, the same error representation and restarting mechanism apply to all these approximations.
This observation forms the basis of the sketch-and-restart framework developed in the next section.
Different basis generation processes lead to different restarted algorithms, while the underlying restarting formula remains unchanged.

\section{Sketch-and-restart algorithms for matrix functions}
\label{sec:sketching_quad_restarting}

The previous section showed that quadrature-based restarting applies to any Arnoldi-like approximation of the form \(\Basis_{m}f(\Hessen_{m})\Mat{e}_{1}\beta\), provided that the required orthogonality condition is satisfied.
In this section, we instantiate this restarting mechanism using different strategies for generating Arnoldi-like decompositions.
Specifically, we consider three such strategies: the existing sketched Arnoldi process, a new sketched harmonic Arnoldi process introduced in this work, and an adaptive truncated Arnoldi process combined with a rank-\(1\) update.
These strategies lead to two restarted algorithms for computing \(f(\Mat{A})\Mat{b}\).

\subsection{Basis generation}
\label{subsec:basis_generation}

The standard Arnoldi process~\cite{Saad_2003} generates an orthonormal basis for the Krylov subspace \(\Kry_{m}(\Mat{A}, \Mat{b})\), but its orthogonalization and storage costs can become substantial when \(m\) is large.
In the sketch-and-restart framework, full orthogonality is not required.
It suffices to construct an Arnoldi-like decomposition that satisfies the orthogonality condition required by the corresponding approximation.
We therefore consider three basis generation strategies that produce possibly nonorthonormal bases with different computational and storage trade-offs.

The first strategy is the \emph{sketched Arnoldi~(s-Arnoldi) process}~\cite{Balabanov_Grigori_2022, deDamas_Grigori_Simunec_Timsit_2025}.
It is based on the Arnoldi process but computes the Gram--Schmidt coefficients in the sketched space.
The same coefficients are then used to update the full-space basis vectors, as summarized in Algorithm~\ref{alg:sketched_arnoldi}.
For simplicity, we assume that no breakdown occurs.
The process yields a standard Arnoldi decomposition in the sketched space.
Consequently, the resulting Arnoldi-like decomposition satisfies
\(\basis_{m + 1} \perp_{\Mat{S}} \Basis_{m}\), and the sArnoldi approximation can be computed directly from~\eqref{eq:s_arnoldi_approx_last_sorth}.
The complexity of the s-Arnoldi process is
\(\bigO(m T_{\apply} + m T_{\sketching} + N m^{2} + m^{3})\),
where \(T_{\apply}\) and \(T_{\sketching}\) denote the costs of applying \(\Mat{A}\) and \(\Mat{S}\) to a vector, respectively.
Compared with the standard Arnoldi process, s-Arnoldi reduces the orthogonalization cost by nearly one half, while the additional costs are of lower order.

\begin{algorithm}[!htbp]
  \caption{Sketched Arnoldi~(s-Arnoldi) process}
  \label{alg:sketched_arnoldi}
  \begin{algorithmic}[1]
    \REQUIRE{Matrix \(\Mat{A}\), vector \(\Mat{b}\), subspace dimension \(m\), sketching matrix \(\Mat{S}\)}
    \ENSURE{Arnoldi-like decomposition
    \(\Mat{A}\Basis_{m} = \Basis_{m + 1}\uHessen_{m}\)
    with \(\Mat{b} = \basis_{1}\beta\)}
    \STATE{\(\tbasis_{1} = \Mat{b}\), \(\tskbasis_{1} = \Mat{S}\tbasis_{1}\).}
    \STATE{\(\beta = \|\tskbasis_{1}\|\),
    \(\skbasis_{1} = \tskbasis_{1}/\beta\),
    \(\basis_{1} = \tbasis_{1}/\beta\).}
    \FOR{\(j = 1, 2, \dotsc, m\)}
      \STATE{\(\tbasis_{j + 1} = \Mat{A}\basis_{j}\),
      \(\tskbasis_{j + 1} = \Mat{S}\tbasis_{j + 1}\).}
      \FOR{\(i = 1, \dotsc, j\)}
        \STATE{\(\hessen_{i, j}
        = \langle \tskbasis_{j + 1}, \skbasis_{i} \rangle\),
        \(\tskbasis_{j + 1}
        = \tskbasis_{j + 1} - \skbasis_{i}\hessen_{i, j}\),
        \(\tbasis_{j + 1}
        = \tbasis_{j + 1} - \basis_{i}\hessen_{i, j}\).}
      \ENDFOR
      \STATE{\(\hessen_{j + 1, j} = \|\tskbasis_{j + 1}\|\),
      \(\skbasis_{j + 1}
      = \tskbasis_{j + 1}/\hessen_{j + 1, j}\),
      \(\basis_{j + 1}
      = \tbasis_{j + 1}/\hessen_{j + 1, j}\).}
    \ENDFOR
  \end{algorithmic}
\end{algorithm}

The second strategy is the \emph{sketched harmonic Arnoldi~(sHm-Arnoldi) process}, which, to the best of our knowledge, is new.
It is similar to the s-Arnoldi process but aims to generate a nonorthonormal basis such that \(\basis_{j + 1} \perp_{\Mat{S}} \Mat{A}\Basis_{j}\) for \(1 \leq j \leq m\).
In particular, the resulting Arnoldi-like decomposition satisfies \(\basis_{m + 1} \perp_{\Mat{S}} \Mat{A}\Basis_{m}\), so the sHmArnoldi approximation can be computed directly from~\eqref{eq:s_hm_arnoldi_approx_last_shm_orth}.
The procedure, summarized in Algorithm~\ref{alg:sketched_harmonic_arnoldi}, orthogonalizes each new sketched basis vector against the sketched images of the current basis vectors under \(\Mat{A}\), thereby enforcing
\(\langle \skbasis_{j + 1},\phaseskAbasis_{i}\rangle = 0\)
for \(1 \leq i \leq j\), where
\(\phaseskAbasis_{i}
=
\skAbasis_{i}/\rho_{i}^{\herm}\)
is scaled so that
\(\langle \skbasis_{i},\phaseskAbasis_{i}\rangle = 1\).
The complexity of sHm-Arnoldi is \(\bigO(m T_{\apply} + m T_{\sketching} + N m^{2} + m^{3})\), and its orthogonalization cost is nearly one half of that of the standard Arnoldi process.

\begin{algorithm}[!htbp]
  \caption{Sketched harmonic Arnoldi~(sHm-Arnoldi) process}
  \label{alg:sketched_harmonic_arnoldi}
  \begin{algorithmic}[1]
    \REQUIRE{Matrix \(\Mat{A}\), vector \(\Mat{b}\), subspace dimension \(m\), sketching matrix \(\Mat{S}\)}
    \ENSURE{Arnoldi-like decomposition
    \(\Mat{A}\Basis_{m} = \Basis_{m + 1}\uHessen_{m}\)
    with \(\Mat{b} = \basis_{1}\beta\)}
    \STATE{\(\tbasis_{1} = \Mat{b}\), \(\tskbasis_{1} = \Mat{S}\tbasis_{1}\).}
    \STATE{\(\beta = \|\tskbasis_{1}\|\),
    \(\skbasis_{1} = \tskbasis_{1}/\beta\),
    \(\basis_{1} = \tbasis_{1}/\beta\).}
    \FOR{\(j = 1, 2, \dotsc, m\)}
      \STATE{\(\tbasis_{j + 1} = \Mat{A}\basis_{j}\),
      \(\tskbasis_{j + 1} = \Mat{S}\tbasis_{j + 1}\),
      \(\skAbasis_{j} = \tskbasis_{j + 1}\).}
      \STATE{\(\rho_{j} = \langle \skbasis_{j}, \skAbasis_{j} \rangle\),
      \(\phaseskAbasis_{j} = \skAbasis_{j}/\rho_{j}^{\herm}\).}
      \FOR{\(i = 1, \dotsc, j\)}
        \STATE{\(\hessen_{i, j}
        = \langle \tskbasis_{j + 1}, \phaseskAbasis_{i} \rangle\),
        \(\tskbasis_{j + 1}
        = \tskbasis_{j + 1} - \skbasis_{i}\hessen_{i, j}\),
        \(\tbasis_{j + 1}
        = \tbasis_{j + 1} - \basis_{i}\hessen_{i, j}\).}
      \ENDFOR
      \STATE{\(\hessen_{j + 1, j} = \|\tskbasis_{j + 1}\|\).}
      \STATE{\(\skbasis_{j + 1}
      = \tskbasis_{j + 1}/\hessen_{j + 1, j}\),
      \(\basis_{j + 1}
      = \tbasis_{j + 1}/\hessen_{j + 1, j}\).}
    \ENDFOR
  \end{algorithmic}
\end{algorithm}

The third strategy is the \emph{truncated Arnoldi~(\(t\)-Arnoldi) process}~\cite{Saad_2003}.
It generates a nonorthonormal basis by orthogonalizing each new vector only against the \(t\) most recently generated basis vectors.
We use this process adaptively by monitoring the condition number of the sketched basis \(\Mat{S}\Basis_{m}\).
Once this condition number exceeds a prescribed threshold \(\tau\), the process is terminated, and a rank-\(1\) update is applied to enforce the required orthogonality condition, as discussed in Section~\ref{subsec:rank1_update}.
Since the resulting Krylov subspace dimension is typically small, the rank-\(1\) update is inexpensive and can be performed at the BLAS-2 level.
The complete procedure is summarized in Algorithm~\ref{alg:truncated_arnoldi} and has complexity
\(\bigO(m T_{\apply} + m T_{\sketching} + N m^{2})\).

\begin{algorithm}[!htbp]
  \caption{Truncated Arnoldi~(\(t\)-Arnoldi) process}
  \label{alg:truncated_arnoldi}
  \begin{algorithmic}[1]
    \REQUIRE{Matrix \(\Mat{A}\), vector \(\Mat{b}\), truncation parameter \(t\), sketching matrix \(\Mat{S}\), condition number threshold \(\tau\), maximum Krylov subspace dimension \(m_{\max}\), sketching dimension increment \(s_{0}\), sketching dimension control parameter \(\eta\)}
    \ENSURE{Arnoldi-like decomposition
    \(\Mat{A}\Basis_{m} = \Basis_{m + 1}\uHessen_{m}\)
    with \(\Mat{b} = \basis_{1}\beta\)}
    \STATE{\(\tbasis_{1} = \Mat{b}\),
    \(\beta = \|\tbasis_{1}\|\),
    \(\basis_{1} = \tbasis_{1}/\beta\).}
    \IF{\(\Mat{S}\) is empty}
      \STATE{Draw a sketching matrix
      \(\Mat{S} \in \complex^{s_{0} \times N}\).}
    \ENDIF
    \STATE{Let \(s\) be the number of rows of \(\Mat{S}\).}
    \STATE{\(\skBasis_{1} = [\Mat{S}\basis_{1}]\).}
    \FOR{\(j = 1, \dotsc, m_{\max}\)}
      \STATE{\(\tbasis_{j + 1} = \Mat{A}\basis_{j}\).}
      \FOR{\(i = \max(1, j - t + 1), \dotsc, j\)}
        \STATE{\(\hessen_{i, j}
        =
        \langle \tbasis_{j + 1}, \basis_{i} \rangle\),
        \(\tbasis_{j + 1}
        =
        \tbasis_{j + 1} - \basis_{i}\hessen_{i, j}\).}
      \ENDFOR
      \STATE{\(\hessen_{j + 1, j} = \|\tbasis_{j + 1}\|\),
      \(\basis_{j + 1}
      =
      \tbasis_{j + 1}/\hessen_{j + 1, j}\).}
      \STATE{\(\skBasis_{j + 1}
      =
      [\skBasis_{j}, \Mat{S}\basis_{j + 1}]\).}
      \IF{\(\cond(\skBasis_{j + 1}) > \tau\)}
        \STATE{Break.}
      \ENDIF
      \IF{\(s < \eta(j + 1)\)}
        \STATE{Draw a sketching matrix
        \(\Mat{S}_{\incr} \in \complex^{s_{0} \times N}\).}
        \STATE{Update
        \(\Mat{S}
        \gets
        \begin{bmatrix}
          \Mat{S} \\
          \Mat{S}_{\incr}
        \end{bmatrix}\)
        and
        \(\skBasis_{j + 1}
        \gets
        \begin{bmatrix}
          \skBasis_{j + 1} \\
          \Mat{S}_{\incr}\Basis_{j + 1}
        \end{bmatrix}\).}
        \STATE{\(s \gets s + s_{0}\).}
      \ENDIF
    \ENDFOR
    \STATE{Set \(m = j\).}
    \STATE{Perform a rank-\(1\) update as described in Section~\ref{subsec:rank1_update}  so that the required orthogonality condition is satisfied.}
  \end{algorithmic}
\end{algorithm}

\subsection{Algorithms}
\label{subsec:algorithms}

We now present two algorithmic realizations of the sketch-and-restart framework.
Algorithm~\ref{alg:restart_skrylov} uses a fixed Krylov subspace dimension and applies either the s-Arnoldi or the sHm-Arnoldi process in each restart cycle, leading respectively to the restarted sArnoldi and restarted sHmArnoldi methods.
The error function is updated as in Algorithm~\ref{alg:restart_arnoldi_like_framework}, and the correction term is evaluated by quadrature.
Algorithm~\ref{alg:restart_krylov_adaptive} instead chooses the Krylov subspace dimension adaptively using the \(t\)-Arnoldi process.
The dimension \(m_{0}\) obtained in the first restart cycle is used as the upper bound \(m_{\max}\) in all subsequent cycles.
The same quadrature-based restarting mechanism is then applied to the resulting Arnoldi-like decompositions.

\begin{algorithm}[!htbp]
  \caption{Restarted sketched Krylov subspace methods for computing \(f(\Mat{A})\Mat{b}\)}
  \label{alg:restart_skrylov}
  \begin{algorithmic}[1]
    \REQUIRE{Matrix \(\Mat{A}\), vector \(\Mat{b}\), function \(f\), subspace dimension \(m\), sketching dimension \(s\)}
    \ENSURE{\(\Mat{f}^{[k]} \approx f(\Mat{A})\Mat{b}\)}
    \STATE{Generate a sketching matrix \(\Mat{S} \in \complex^{s \times N}\).}
    \STATE{Compute an Arnoldi-like decomposition
    \(\Mat{A}\Basis_{m}^{[0]}
    =
    \Basis_{m + 1}^{[0]}\uHessen_{m}^{[0]}\)
    with \(\Mat{b} = \basis_{1}^{[0]}\beta^{[0]}\)
    using the s-Arnoldi process~(Algorithm~\ref{alg:sketched_arnoldi}) or the sHm-Arnoldi process~(Algorithm~\ref{alg:sketched_harmonic_arnoldi}).}
    \STATE{\(\Mat{f}^{[0]}
    =
    \Basis_{m}^{[0]}f(\Hessen_{m}^{[0]})\Mat{e}_{1}\beta^{[0]}\).}
    \FOR{\(k = 1, 2, \dotsc\) until convergence}
      \STATE{Determine the error function \(\errfunc^{[k]}(z)\).}
      \STATE{Compute an Arnoldi-like decomposition
      \(\Mat{A}\Basis_{m}^{[k]}
      =
      \Basis_{m + 1}^{[k]}\uHessen_{m}^{[k]}\)
      with
      \(\basis_{m + 1}^{[k - 1]}
      =
      \basis_{1}^{[k]}\beta^{[k]}\)
      using the s-Arnoldi process~(Algorithm~\ref{alg:sketched_arnoldi}) or the sHm-Arnoldi process~(Algorithm~\ref{alg:sketched_harmonic_arnoldi}).}
      \STATE{Compute
      \(\Mat{g}^{[k]}
      \approx
      \Basis_{m}^{[k]}\errfunc^{[k]}(\Hessen_{m}^{[k]})\Mat{e}_{1}\beta^{[k]}\)
      using quadrature.}
      \STATE{Update
      \(\Mat{f}^{[k]}
      =
      \Mat{f}^{[k - 1]}
      +
      \Mat{g}^{[k]}\).}
    \ENDFOR
  \end{algorithmic}
\end{algorithm}

\begin{algorithm}[!htbp]
  \caption{Restarted adaptive Krylov subspace methods for computing \(f(\Mat{A})\Mat{b}\)}
  \label{alg:restart_krylov_adaptive}
  \begin{algorithmic}[1]
    \REQUIRE{Matrix \(\Mat{A}\), vector \(\Mat{b}\), function \(f\), truncation parameter \(t\), condition number threshold \(\tau\), maximum Krylov subspace dimension \(m_{\max}\), sketching dimension increment \(s_{0}\), sketching dimension control parameter \(\eta\)}
    \ENSURE{\(\Mat{f}^{[k]} \approx f(\Mat{A})\Mat{b}\)}
    \STATE{Use Algorithm~\ref{alg:truncated_arnoldi} to obtain an Arnoldi-like decomposition
    \(\Mat{A}\Basis_{m_{0}}^{[0]}
    =
    \Basis_{m_{0} + 1}^{[0]}\uHessen_{m_{0}}^{[0]}\)
    with \(\Mat{b} = \basis_{1}^{[0]}\beta^{[0]}\), as well as the sketching matrix \(\Mat{S}\).}
    \STATE{Set \(m_{\max} = m_{0}\).}
    \STATE{\(\Mat{f}^{[0]}
    =
    \Basis_{m_{0}}^{[0]}f(\Hessen_{m_{0}}^{[0]})\Mat{e}_{1}\beta^{[0]}\).}
    \FOR{\(k = 1, 2, \dotsc\) until convergence}
      \STATE{Determine the error function \(\errfunc^{[k]}(z)\).}
      \STATE{Use Algorithm~\ref{alg:truncated_arnoldi} to obtain an Arnoldi-like decomposition
      \(\Mat{A}\Basis_{m_{k}}^{[k]}
      =
      \Basis_{m_{k} + 1}^{[k]}\uHessen_{m_{k}}^{[k]}\)
      with
      \(\basis_{m_{k - 1} + 1}^{[k - 1]}
      =
      \basis_{1}^{[k]}\beta^{[k]}\).}
      \STATE{Compute
      \(\Mat{g}^{[k]}
      \approx
      \Basis_{m_{k}}^{[k]}
      \errfunc^{[k]}(\Hessen_{m_{k}}^{[k]})
      \Mat{e}_{1}\beta^{[k]}\)
      using quadrature.}
      \STATE{Update
      \(\Mat{f}^{[k]}
      =
      \Mat{f}^{[k - 1]}
      +
      \Mat{g}^{[k]}\).}
    \ENDFOR
  \end{algorithmic}
\end{algorithm}


\section{Convergence analysis}
\label{sec:convergence_analysis}

In this section, we analyze the convergence of the proposed sketch-and-restart algorithms.
We assume exact arithmetic and neglect quadrature errors, so the analysis focuses on the Krylov and sketching components of the methods.
The non-sketched adaptive variants can be related directly to existing restarted Arnoldi and harmonic Arnoldi methods through the Arnoldi-like framework developed above.
The main new issue is the convergence of the restarted sketched harmonic Arnoldi method, for which the Petrov--Galerkin condition is imposed only after applying the sketching matrix.

Our analysis extends the GMRES-based approach of~\cite{FGS14b} to the sketched setting.
This approach is particularly useful because it does not rely on CG-type arguments and therefore remains applicable in the presence of complex Ritz values.
We focus on Stieltjes functions of the form
\begin{equation}
\label{eq:stieltjes_func}
    f(z)
    =
    \int_{-\infty}^{0}
    \frac{g(t)}{t - z}
    \diff t,
\end{equation}
where \(g\) is a suitable nonpositive function on \((-\infty, 0]\).

\subsection{Harmonic Arnoldi approximation with a nonorthogonal basis}
\label{subsec:hm_arnoldi}

We first consider the non-sketched adaptive variants of Algorithm~\ref{alg:restart_krylov_adaptive}.
We show that, after the rank-\(1\) update, these variants are mathematically equivalent to the restarted Arnoldi and HmArnoldi methods analyzed in~\cite{FGS14b}, even though the basis generated by the \(t\)-Arnoldi process is not orthonormal.

Assume that we have an Arnoldi-like decomposition~\eqref{eq:arnoldi_like_decomp}, where \(\Basis_{m}\) has full column rank but is not necessarily orthonormal.
We further assume that \(\basis_{m + 1} \perp \Mat{A}\Basis_{m}\), which can be enforced by the rank-\(1\) update discussed in Section~\ref{subsec:rank1_update}.
Under this assumption, Algorithm~\ref{alg:restart_krylov_adaptive} is mathematically equivalent to the restarted HmArnoldi method analyzed in~\cite{FGS14b}.
The corresponding HmArnoldi approximation to the solution of the shifted linear system~\eqref{eq:shifted_linear_system} is
\(\Mat{x}_{m}(t)
=
\Basis_{m}(t\Mat{I}_{m} - \Hessen_{m})^{-1}\Mat{e}_{1}\beta\).
Its residual satisfies
\begin{equation*}
    \begin{aligned}
        \Mat{r}_{m}(t)
        &=
        \Mat{b}
        -
        (t\Mat{I} - \Mat{A})\Mat{x}_{m}(t)
        \\
        &=
        \Mat{b}
        -
        (t\Mat{I} - \Mat{A})
        \Basis_{m}
        (t\Mat{I}_{m} - \Hessen_{m})^{-1}
        \Mat{e}_{1}\beta
        \\
        &=
        \Mat{b}
        -
        [\Basis_{m} \ \basis_{m + 1}]
        (t\underline{\Mat{I}}_{m} - \uHessen_{m})
        (t\Mat{I}_{m} - \Hessen_{m})^{-1}
        \Mat{e}_{1}\beta
        \\
        &=
        \basis_{m + 1}
        \hessen_{m + 1,m}
        \Mat{e}_{m}^{\trans}
        (t\Mat{I}_{m} - \Hessen_{m})^{-1}
        \Mat{e}_{1}\beta
        \\
        &\eqcolon
        \basis_{m + 1}\zeta(t).
    \end{aligned}
\end{equation*}
Thus, the residual is collinear with \(\basis_{m + 1}\) for all \(t\).

For each \(t\), let \(p_{m - 1,t} \in \poly_{m - 1}\) be the polynomial that interpolates \(h_{t}(z) = (t - z)^{-1}\) at the harmonic Ritz values \(\spec(\Hessen_{m})\).
Then Lemma~\ref{lem:poly_A_b1_arnoldi_like} gives
\begin{equation*}
    \Mat{x}_{m}(t)
    =
    \Basis_{m}(t\Mat{I}_{m} - \Hessen_{m})^{-1}\Mat{e}_{1}\beta
    =
    p_{m - 1,t}(\Mat{A})\Mat{b}.
\end{equation*}
If \(f\) is a Stieltjes function of the form~\eqref{eq:stieltjes_func}, then the HmArnoldi approximation~\eqref{eq:hm_arnoldi_approx_last_hm_orth} can be written as
\(\Mat{f}_{m}
=
\int_{-\infty}^{0}
g(t)\Mat{x}_{m}(t)
\diff t\).
Up to the sign convention in the Stieltjes representation, this is precisely the class of HmArnoldi approximants analyzed in~\cite{FGS14b}.
Consequently, the convergence results of~\cite{FGS14b} carry over to Algorithm~\ref{alg:restart_krylov_adaptive}.
The following theorem summarizes this observation.

\begin{theorem}[Convergence of the adaptive restarted Arnoldi-like method]
\label{thm:conv_adaptive_restarted_arnoldi_like}
    Let \(f\) be a Stieltjes function, and consider Algorithm~\ref{alg:restart_krylov_adaptive}, where the basis is generated by the truncated Arnoldi process in Algorithm~\ref{alg:truncated_arnoldi}.

    If the rank-\(1\) update enforces \(\basis_{m + 1} \perp \Basis_{m}\), then the algorithm is mathematically equivalent to the restarted Arnoldi method for computing \(f(\Mat{A})\Mat{b}\).
    Hence, if \(\Mat{A}\) is Hermitian positive definite, the method converges for every restart length \(m \geq 1\).

    If the rank-\(1\) update enforces \(\basis_{m + 1} \perp \Mat{A}\Basis_{m}\), then the algorithm is mathematically equivalent to the restarted HmArnoldi method for computing \(f(\Mat{A})\Mat{b}\).
    Hence, if \(\Mat{A}\) is positive real, the method converges for every restart length \(m \geq 1\).
\end{theorem}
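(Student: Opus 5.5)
The plan is to show that each restart cycle of Algorithm~\ref{alg:restart_krylov_adaptive}, after the rank-\(1\) update, produces exactly the iterate of the corresponding classical restarted method. Convergence then follows from the known results: the Hermitian positive definite case of~\cite{Frommer_Guettel_Schweitzer_2014,FGS14b}, and the positive real harmonic case of~\cite{FGS14b}. The argument goes by induction on the cycle index \(k\), with the invariant that the starting vectors \(\basis_{1}^{[k]}\beta^{[k]}\) of the adaptive method and of the classical method agree up to a scalar. The same then holds for the accumulated error functions \(\errfunc^{[k]}\) and the corrections \(\Mat{g}^{[k]}\).

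\textbf{Single cycle.} Let \(\Mat{A}\Basis_{m} = \Basis_{m}\updatedHessen_{m} + \updatedbasis_{m+1}\updatedhessen_{m+1,m}\Mat{e}_{m}^{\trans}\) be the updated decomposition from Section~\ref{subsec:rank1_update}, with \(\Basis_{m}\) of full column rank spanning \(\Kry_{m}(\Mat{A},\basis_{1})\). Let \(\orthBasis_{m+1}\) be the Arnoldi basis, so \(\Mat{A}\orthBasis_{m} = \orthBasis_{m+1}\uHessen_{m}^{\mathrm{A}}\).
\begin{itemize}
\item Write \(\Basis_{m} = \orthBasis_{m}\Mat{R}\), where \(\Mat{R}\) is upper triangular and nonsingular, because both bases are nested for the same Krylov sequence.
\item If \(\updatedbasis_{m+1} \perp \Basis_{m}\), then \eqref{eq:arnoldi_approx_last_orth} gives \(\updatedHessen_{m} = \Basis_{m}^{\pinv}\Mat{A}\Basis_{m} = \Mat{R}^{-1}\Hessen_{m}^{\mathrm{A}}\Mat{R}\).
\item Hence \(\Basis_{m}f(\updatedHessen_{m})\Mat{e}_{1}\beta = \orthBasis_{m}f(\Hessen_{m}^{\mathrm{A}})\Mat{e}_{1}\beta'\), which is the Arnoldi approximation; the normalizations are absorbed because \(\Mat{R}\Mat{e}_{1}\beta = \Mat{e}_{1}\beta'\) with \(\Mat{b}=\orthBasis_{m}\Mat{e}_{1}\beta'\).
\item The spectra coincide, \(\spec(\updatedHessen_{m}) = \spec(\Hessen_{m}^{\mathrm{A}})\), so the nodal polynomials \(\phi_{m}\) agree.
\item The new vector \(\updatedbasis_{m+1}\) lies in \(\Kry_{m+1}\) and is orthogonal to \(\Kry_{m}\), so it is a scalar multiple of \(\Mat{v}_{m+1}\).
\end{itemize}
The harmonic case is identical, using \eqref{eq:hm_arnoldi_approx_last_hm_orth}. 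The matrix \(((\Mat{A}\Basis_{m})^{\herm}\Basis_{m})^{-1}(\Mat{A}\Basis_{m})^{\herm}\Mat{A}\Basis_{m}\) transforms by the same similarity \(\Mat{R}\). Also, \(\updatedbasis_{m+1}\in\Kry_{m+1}\) with \(\updatedbasis_{m+1}\perp\Mat{A}\Kry_{m}\) is unique up to scaling. The polynomial-interpolation viewpoint gives a second route: by Lemma~\ref{lem:poly_A_b1_arnoldi_like}, both iterates equal \(p_{m-1}(\Mat{A})\Mat{b}\) with the same interpolation nodes.

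\textbf{Induction.} By Theorem~\ref{thm:err_arnoldi_like_approx} and Corollary~\ref{cor:err_arnoldi_like_approx_k_restart}, the error after \(k\) cycles is \(\errfunc^{[k+1]}(\Mat{A})\basis_{m+1}^{[k]}\). This depends only on the nodal polynomials \(\phi^{[\ell]}\) and on the product \(\gamma_{m}^{[\ell]}\beta^{[\ell]}\basis_{m+1}^{[\ell]}\). That product equals \(\phi^{[\ell]}(\Mat{A})\basis_{1}^{[\ell]}\beta^{[\ell]}\) by Lemma~\ref{lem:poly_A_b1_arnoldi_like}, so it is invariant under the change of basis. Thus all scaling factors cancel, and the error vectors of the two methods agree at every cycle. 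The cycle lengths \(m_{k}\) vary, but Corollary~\ref{cor:err_arnoldi_like_approx_k_restart} extends to variable dimensions, and the cited convergence theorems hold cycle by cycle for any \(m_{k}\geq 1\).

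\textbf{Main obstacle.} The delicate point is invoking~\cite{FGS14b}, whose convergence theorems assume a fixed \(m\) and a particular sign convention. I would check that their proof only uses a per-cycle contraction, namely a uniform GMRES-type residual reduction for positive real \(\Mat{A}\) valid for every \(m\geq 1\). That bound then survives varying \(m_{k}\leq m_{\max}\), and the sign convention only flips \(g\).
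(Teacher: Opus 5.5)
Your proposal is correct and follows the same route as the paper. You establish equivalence with the classical restarted Arnoldi and HmArnoldi methods via the Arnoldi-like framework and the rank-$1$ update, then invoke \cite[Theorems~4.3 and~6.5]{FGS14b}, noting that these bounds hold for every restart length and hence for varying $m_k$; your explicit change of basis $\Basis_{m} = \orthBasis_{m}\Mat{R}$ and the telescoping of the factors $\gamma_{m}^{[\ell]}\beta^{[\ell]}$ via Lemma~\ref{lem:poly_A_b1_arnoldi_like} make precise the equivalence step that the paper states without detail.
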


\begin{proof}
    The equivalence to the restarted Arnoldi and restarted HmArnoldi methods follows from the Arnoldi-like framework and the rank-\(1\) update described above.
    The convergence of the restarted Arnoldi method for Hermitian positive definite matrices is established in~\cite[Theorem~4.3]{FGS14b}, while the convergence of the restarted HmArnoldi method for positive real matrices is proved in~\cite[Theorem~6.5]{FGS14b}.
    Since both convergence results hold for every restart length \(m \geq 1\), they remain valid when the restart length varies between restart cycles.
\end{proof}

\subsection{Sketched harmonic Arnoldi with a nonorthogonal basis}
\label{subsec:shm_arnoldi}

We now turn to the sketched setting, where the Petrov--Galerkin condition is imposed only after applying the sketching matrix.
Unlike the non-sketched variants discussed above, the restarted sHmArnoldi method cannot be reduced directly to the HmArnoldi method analyzed in~\cite{FGS14b}.
We therefore establish the residual relations needed to extend the GMRES-based convergence analysis to the sketched setting.

Suppose that, in the Arnoldi-like decomposition~\eqref{eq:arnoldi_like_decomp}, the matrix \(\Basis_{m}\) has full column rank but is not necessarily orthonormal.
Assume further that \(\basis_{m + 1} \perp_{\Mat{S}} \Mat{A}\Basis_{m}\).
The sHmArnoldi approximation to the solution of the shifted linear system~\eqref{eq:shifted_linear_system} is
\(\hMat{x}_{m}(t)
=
\Basis_{m}(t\Mat{I}_{m} - \Hessen_{m})^{-1}\Mat{e}_{1}\beta\).
As in Section~\ref{subsec:hm_arnoldi}, its residual satisfies
\begin{equation}
\label{eq:s_hm_arnoldi_res}
    \hMat{r}_{m}(t)
    =
    \Mat{b}
    -
    (t\Mat{I} - \Mat{A})\hMat{x}_{m}(t)
    =
    \basis_{m + 1}\widehat{\zeta}(t).
\end{equation}
Thus, the residual is collinear with \(\basis_{m + 1}\) for all \(t\).

Similarly, for each \(t\), there exists a polynomial
\(\widehat{p}_{m - 1,t} \in \poly_{m - 1}\) such that
\(\hMat{x}_{m}(t)
=
\widehat{p}_{m - 1,t}(\Mat{A})\Mat{b}\),
where \(\widehat{p}_{m - 1,t}\) interpolates
\(h_{t}(z) = (t - z)^{-1}\) at the sketched harmonic Ritz values \(\spec(\Hessen_{m})\).
Therefore, the residual can be written as
\begin{equation*}
    \hMat{r}_{m}(t)
    =
    \widehat{q}_{m,t}(\Mat{A})\Mat{b},
    \qquad
    \widehat{q}_{m,t}(z)
    =
    1 - (t - z)\widehat{p}_{m - 1,t}(z).
\end{equation*}
The following lemma is the sketched analogue of~\cite[Lemma~6.2(iv)]{FGS14b}.
We include a direct proof for completeness.

\begin{lemma}[A formula for the residual]
\label{lem:shm_arnoldi_res}
    Let \(\spec(\Hessen_{m}) = \{\theta_{1}, \dotsc, \theta_{m}\}\), and assume that \(\theta_{j} \neq 0\) for \(j = 1, \dotsc, m\).
    Then the residuals defined in~\eqref{eq:s_hm_arnoldi_res} satisfy
    \begin{equation*}
        \hMat{r}_{m}(t)
        =
        \frac{1}
        {(1 - t/\theta_{1})\dotsm(1 - t/\theta_{m})}
        \hMat{r}_{m}(0)
        \eqcolon
        \hMat{r}_{m}(0)\eta_{m}(t).
    \end{equation*}
\end{lemma}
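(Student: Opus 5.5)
The plan is to use the residual-polynomial representation \(\hMat{r}_{m}(t) = \widehat{q}_{m,t}(\Mat{A})\Mat{b}\) and identify \(\widehat{q}_{m,t}\) explicitly. First I will show that \(\widehat{q}_{m,t}(z)\) is a polynomial of degree at most \(m\) with \(\widehat{q}_{m,t}(t) = 1\). I will then show that it vanishes at each harmonic Ritz value \(\theta_{j}\), counted with multiplicity. The vanishing follows because \(\widehat{p}_{m-1,t}\) interpolates \(h_{t}(z) = (t-z)^{-1}\) at the \(\theta_{j}\), so \(1 - (t-z)\widehat{p}_{m-1,t}(z)\) vanishes there. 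In the Hermite case, the derivatives of \((t-z)\widehat{p}_{m-1,t}(z)\) agree with those of \((t-z)h_{t}(z) \equiv 1\) up to the required order, so the root multiplicities are also correct. This requires \(t \notin \spec(\Hessen_{m})\), which is the setting in which \(\hMat{x}_{m}(t)\) is defined.

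Next I will determine the degree exactly. A polynomial of degree at most \(m\) with these \(m\) roots must equal \(c(t)\phi_{m}(z)\). I will argue that the leading coefficient is nonzero, since otherwise \(\widehat{q}_{m,t}\) would be a polynomial of degree below \(m\) with \(m\) roots, hence zero, contradicting \(\widehat{q}_{m,t}(t) = 1\). The normalization \(\widehat{q}_{m,t}(t) = 1\) then fixes the constant, giving
\[
\widehat{q}_{m,t}(z) = \frac{\phi_{m}(z)}{\phi_{m}(t)} = \prod_{j=1}^{m} \frac{z - \theta_{j}}{t - \theta_{j}},
\]
with \(\phi_{m}\) as in Theorem~\ref{thm:err_arnoldi_like_approx}. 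Setting \(t = 0\) and using \(\theta_{j} \neq 0\) gives \(\widehat{q}_{m,0}(z) = \phi_{m}(z)/\phi_{m}(0)\).

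Finally, taking the ratio gives
\[
\widehat{q}_{m,t}(z) = \frac{\phi_{m}(0)}{\phi_{m}(t)}\,\widehat{q}_{m,0}(z), \qquad \frac{\phi_{m}(0)}{\phi_{m}(t)} = \prod_{j=1}^{m} \frac{-\theta_{j}}{t - \theta_{j}} = \prod_{j=1}^{m} \frac{1}{1 - t/\theta_{j}}.
\]
Applying both sides to \(\Mat{A}\) and \(\Mat{b}\) yields \(\hMat{r}_{m}(t) = \eta_{m}(t)\,\hMat{r}_{m}(0)\).

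As a cross-check, I could work directly from \eqref{eq:s_hm_arnoldi_res}, where \(\widehat{\zeta}(t) = \hessen_{m+1,m}\Mat{e}_{m}^{\trans}(t\Mat{I}_{m} - \Hessen_{m})^{-1}\Mat{e}_{1}\beta\). For an upper Hessenberg matrix, the cofactor formula gives \(\Mat{e}_{m}^{\trans}(t\Mat{I}_{m} - \Hessen_{m})^{-1}\Mat{e}_{1} = \prod_{j} \hessen_{j+1,j} / \det(t\Mat{I}_{m} - \Hessen_{m})\). Hence \(\widehat{\zeta}(t) = \gamma_{m}\beta/\phi_{m}(t)\), and the ratio \(\widehat{\zeta}(t)/\widehat{\zeta}(0)\) is exactly \(\eta_{m}(t)\). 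This second route avoids the interpolation subtleties entirely, and I will probably present it as the main argument.

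The main obstacle is the Hermite-multiplicity and degree bookkeeping in the polynomial route. In the cofactor route, the only delicate step is the determinant identity for the \((m,1)\) entry of the inverse of a Hessenberg matrix, which is standard: the corresponding minor is upper triangular with the subdiagonal entries on its diagonal.
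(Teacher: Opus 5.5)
Your proposal is correct. Your first (polynomial) route is essentially the paper's proof. The paper writes down \(\widehat{p}_{m-1,t}(z) = \bigl(1 - \phi_{m}(z)/\phi_{m}(t)\bigr)/(t - z)\), checks that it interpolates \(h_{t}\) at \(\spec(\Hessen_{m})\), and reads off \(\widehat{q}_{m,t} = \phi_{m}/\phi_{m}(t)\). You reach the same polynomial by a uniqueness argument instead. Your leading-coefficient step is unnecessary, since \(c\,\phi_{m}(t) = 1\) alone fixes \(c\).

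The cofactor route, which you intend to make the main argument, is genuinely different. It bypasses interpolation and the representation \(\hMat{x}_{m}(t) = \widehat{p}_{m-1,t}(\Mat{A})\Mat{b}\) entirely. Instead it evaluates the scalar \(\widehat{\zeta}(t)\) in \eqref{eq:s_hm_arnoldi_res} directly from the Hessenberg structure and \(\det(t\Mat{I}_{m} - \Hessen_{m}) = \phi_{m}(t)\). This gives a self-contained linear-algebra proof in which repeated \(\theta_{j}\) need no Hermite bookkeeping. It also yields the explicit form \(\hMat{r}_{m}(t) = \gamma_{m}\beta\,\basis_{m+1}/\phi_{m}(t)\); the paper's route gives this too, but only after applying Lemma \ref{lem:poly_A_b1_arnoldi_like} to \(\phi_{m}\). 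The paper's route, in turn, is shorter given the lemmas already in place. It also keeps the residual-polynomial viewpoint \(\hMat{r}_{m}(t) = \widehat{q}_{m,t}(\Mat{A})\Mat{b}\) that the subsequent GMRES-type analysis relies on.

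One precision in the cofactor identity: the product runs over \(j = 1, \dotsc, m-1\). Deleting row \(1\) and column \(m\) of \(t\Mat{I}_{m} - \Hessen_{m}\) leaves an upper triangular matrix with diagonal \(-\hessen_{2,1}, \dotsc, -\hessen_{m,m-1}\). Its sign \((-1)^{m-1}\) cancels the cofactor sign \((-1)^{m+1}\), and the factor \(\hessen_{m+1,m}\) in \(\widehat{\zeta}(t)\) then completes \(\gamma_{m}\).
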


\begin{proof}
    Let \(\phi_{m}(z) = (z - \theta_{1})\dotsm(z - \theta_{m})\).
    It is straightforward to verify that
    \(\widehat{p}_{m - 1,t}(z)
    =
    \bigl(1 - \phi_{m}(z)/\phi_{m}(t)\bigr)/(t - z)\)
    is the polynomial interpolating \(h_{t}(z) = (t - z)^{-1}\) at \(\spec(\Hessen_{m})\).
    Direct substitution gives
    \(\widehat{q}_{m,t}(z)
    =
    1 - (t - z)\widehat{p}_{m - 1,t}(z)
    =
    \phi_{m}(z)/\phi_{m}(t)\).
    Therefore,
    \begin{equation*}
        \hMat{r}_{m}(t)
        =
        \widehat{q}_{m,t}(\Mat{A})\Mat{b}
        =
        \frac{\phi_{m}(\Mat{A})}{\phi_{m}(t)}\Mat{b}
        =
        \frac{\phi_{m}(0)}{\phi_{m}(t)}
        \frac{\phi_{m}(\Mat{A})}{\phi_{m}(0)}
        \Mat{b}
        =
        \frac{\phi_{m}(0)}{\phi_{m}(t)}
        \hMat{r}_{m}(0).
    \end{equation*}
    Since
    \(\phi_{m}(t)/\phi_{m}(0)
    =
    (1 - t/\theta_{1})\dotsm(1 - t/\theta_{m})\),
    the stated formula follows.
\end{proof}

Suppose that \(\Mat{A}\) is positive real.
If the sketched harmonic Ritz values \(\spec(\Hessen_{m})\) lie in the open right half-plane \(\RightHalfPlane\), the modulus of the factor \(\phi_{m}(0)/\phi_{m}(t)\) decreases as \(t\) moves from \(0\) toward \(-\infty\).
Moreover, \(\hMat{r}_{m}(0)\) is precisely the residual of the sGMRES approximant \(\hMat{x}_{m}(0)\).
The following lemma provides a bound for \(\|\hMat{r}_{m}(0)\|\).

\begin{lemma}[Bound on the residual]
\label{lem:sgmres_res_bound}
    Let \(\Mat{A} \in \complex^{N \times N}\) be positive real, and let \(\Mat{b} \in \complex^{N}\).
    Let \(\hMat{x}_{m}(t)\) be the sHmArnoldi approximation to the solution of the shifted linear system~\eqref{eq:shifted_linear_system}, and let \(\hMat{r}_{m}(t)\) be the corresponding residual defined in~\eqref{eq:s_hm_arnoldi_res}.
    Then
    \begin{equation*}
        \|\hMat{r}_{m}(0)\|
        \leq
        C_{\varepsilon}\alpha_{m}\|\Mat{b}\|,
    \end{equation*}
    where
    \(C_{\varepsilon}
    =
    \sqrt{(1 + \varepsilon)/(1 - \varepsilon)}\)
    and
    \(\alpha_{m}
    =
    (1 - \delta\delta^{\prime})^{m/2}
    \in [0,1)\).
    The constants \(\delta\) and \(\delta^{\prime}\) are defined by
    \begin{equation}
    \label{eq:delta_delta_prime}
        \begin{aligned}
            \delta
            &=
            \min
            \bigl\{
                |\Mat{v}^{\herm}\Mat{A}\Mat{v}|
                :
                \Mat{v} \in \complex^{N},
                \ \|\Mat{v}\| = 1
            \bigr\},
            \\
            \delta^{\prime}
            &=
            \min
            \bigl\{
                |\Mat{v}^{\herm}\Mat{A}^{-1}\Mat{v}|
                :
                \Mat{v} \in \complex^{N},
                \ \|\Mat{v}\| = 1
            \bigr\}.
        \end{aligned}
    \end{equation}
\end{lemma}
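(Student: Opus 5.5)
The plan is to recognize \(\hMat{x}_{m}(0)\) as the sketched GMRES approximation, i.e. the minimizer of the sketched residual norm \(\|\Mat{S}(\Mat{b} - \Mat{A}\Mat{x})\|\) over \(\Mat{x} \in \Kry_{m}(\Mat{A},\Mat{b})\). Then compare with a well-chosen non-optimal polynomial residual, transferring norms between sketched and unsketched spaces via the subspace embedding property~\eqref{eq:subspace_embedding_property}.

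\emph{Step 1 (sketched minimal residual).} At \(t = 0\) the sHmArnoldi condition reads \(\hMat{r}_{m}(0) \perp_{\Mat{S}} \Mat{A}\Kry_{m}(\Mat{A},\Mat{b})\), with \(\hMat{r}_{m}(0) = \Mat{b} - \Mat{A}\hMat{x}_{m}(0)\) (the sign from \((0\cdot\Mat{I} - \Mat{A})\) is absorbed into \(\hMat{x}_{m}(0) \mapsto -\hMat{x}_{m}(0)\)). This is exactly the normal equation of the least-squares problem \(\min_{\Mat{y}} \|\Mat{S}\Mat{b} - \Mat{S}\Mat{A}\Basis_{m}\Mat{y}\|\). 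Hence
\[
\|\Mat{S}\hMat{r}_{m}(0)\| = \min_{q \in \poly_{m},\, q(0) = 1} \|\Mat{S}q(\Mat{A})\Mat{b}\|.
\]
All vectors \(q(\Mat{A})\Mat{b}\) with \(q \in \poly_{m}\) lie in \(\Kry_{m+1}(\Mat{A},\Mat{b})\), so~\eqref{eq:subspace_embedding_property} applies to each of them. This gives
\[
\sqrt{1-\varepsilon}\,\|\hMat{r}_{m}(0)\| \le \|\Mat{S}\hMat{r}_{m}(0)\| \le \|\Mat{S}q(\Mat{A})\Mat{b}\| \le \sqrt{1+\varepsilon}\,\|q(\Mat{A})\Mat{b}\|
\]
for any admissible \(q\). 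Consequently \(\|\hMat{r}_{m}(0)\| \le C_{\varepsilon} \min_{q} \|q(\Mat{A})\Mat{b}\| = C_{\varepsilon}\|\Mat{r}^{\mathrm{GMRES}}_{m}\|\).

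\emph{Step 2 (classical GMRES bound).} It remains to show that the unsketched GMRES residual satisfies \(\|\Mat{r}^{\mathrm{GMRES}}_{m}\| \le (1-\delta\delta')^{m/2}\|\Mat{b}\|\). This is the known Elman-type bound for positive real matrices in the form used in~\cite{FGS14b}. For a self-contained argument, I would take \(q(z) = \prod_{j=1}^{m}(1-\omega_j z)\), i.e. a sequence of one-step minimal residual updates, which is admissible because GMRES is optimal over all such polynomials.

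For one step \(\Mat{r} \mapsto \Mat{r} - \omega\Mat{A}\Mat{r}\) with optimal \(\omega\), we get \(\|\Mat{r}'\|^{2} = \|\Mat{r}\|^{2}\bigl(1 - |\Mat{r}^{\herm}\Mat{A}\Mat{r}|^{2}/(\|\Mat{r}\|^{2}\|\Mat{A}\Mat{r}\|^{2})\bigr)\). Now write \(\Mat{w} = \Mat{A}\Mat{r}\), so that \(\Mat{r}^{\herm}\Mat{A}\Mat{r} = \overline{\Mat{w}^{\herm}\Mat{A}^{-1}\Mat{w}}\). Then
\[
\frac{|\Mat{r}^{\herm}\Mat{A}\Mat{r}|^{2}}{\|\Mat{r}\|^{2}\|\Mat{A}\Mat{r}\|^{2}} = \frac{|\Mat{r}^{\herm}\Mat{A}\Mat{r}|}{\|\Mat{r}\|^{2}}\cdot\frac{|\Mat{w}^{\herm}\Mat{A}^{-1}\Mat{w}|}{\|\Mat{w}\|^{2}} \ge \delta\delta'.
\]
Iterating \(m\) times yields the factor \((1-\delta\delta')^{m/2}\).

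Positive realness gives \(\delta > 0\) directly. It also gives \(\delta' > 0\), since \(\Re(\Mat{v}^{\herm}\Mat{A}^{-1}\Mat{v}) = \Re(\Mat{u}^{\herm}\Mat{A}^{\herm}\Mat{u}) > 0\) with \(\Mat{u} = \Mat{A}^{-1}\Mat{v}\). Finally, \(\delta\delta' \le 1\) by Cauchy--Schwarz, so \(\alpha_{m} \in [0,1)\).

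\emph{Main obstacle.} The main point to handle carefully is Step 1: checking that the sketched Petrov--Galerkin condition at \(t=0\) really is the sketched least-squares optimality condition, and that the embedding is used only on \(\Kry_{m+1}(\Mat{A},\Mat{b})\), where it is assumed to hold. The one-step optimal-\(\omega\) bound in Step 2 is routine.
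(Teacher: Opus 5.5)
Your proof is correct and follows the paper's argument. Step 1 is exactly the paper's chain $\|\hMat{r}_{m}(0)\| \le (1-\varepsilon)^{-1/2}\|\Mat{S}\hMat{r}_{m}(0)\| \le (1-\varepsilon)^{-1/2}\|\Mat{S}\Mat{r}_{m}(0)\| \le C_{\varepsilon}\|\Mat{r}_{m}(0)\|$, using sketched residual minimality and the embedding on $\Kry_{m+1}(\Mat{A},\Mat{b})$, which you justify more explicitly than the paper; the only difference is that where the paper cites \cite[Lemma~6.4]{FGS14b} for the unsketched bound $(1-\delta\delta')^{m/2}\|\Mat{b}\|$, you reprove it with the standard one-step minimal-residual (Elman-type) argument, and that proof is correct as written.
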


\begin{proof}
    Let \(\Mat{x}_{m}(t)\) be the HmArnoldi approximation to the solution of the shifted linear system~\eqref{eq:shifted_linear_system}, and let \(\Mat{r}_{m}(t)\) be the corresponding residual.
    By~\cite[Lemma~6.4]{FGS14b},
    \(\|\Mat{r}_{m}(0)\|
    \leq
    \alpha_{m}\|\Mat{b}\|\).
    Since the subspace embedding property holds for the relevant residual vectors, by the sketched residual minimization property of sGMRES, we have
    \begin{equation*}
        \|\hMat{r}_{m}(0)\|
        \leq
        \frac{1}{\sqrt{1 - \varepsilon}}
        \|\Mat{S}\hMat{r}_{m}(0)\|
        \leq
        \frac{1}{\sqrt{1 - \varepsilon}}
        \|\Mat{S}\Mat{r}_{m}(0)\|
        \leq
        \sqrt{\frac{1 + \varepsilon}{1 - \varepsilon}}
        \|\Mat{r}_{m}(0)\|.
    \end{equation*}
    Combining the above estimate with
    \(\|\Mat{r}_{m}(0)\|
    \leq
    \alpha_{m}\|\Mat{b}\|\)
    completes the proof.
\end{proof}

For a Stieltjes function \(f\) of the form~\eqref{eq:stieltjes_func}, the sHmArnoldi approximation to \(f(\Mat{A})\Mat{b}\) can be written as
\(\hMat{f}_{m}
=
\int_{-\infty}^{0}
g(t)\hMat{x}_{m}(t)
\diff t\).
Consequently,
\begin{equation*}
    \begin{aligned}
        f(\Mat{A})\Mat{b} - \hMat{f}_{m}
        &=
        \int_{-\infty}^{0}
        g(t)
        \bigl(
            \Mat{x}(t) - \hMat{x}_{m}(t)
        \bigr)
        \diff t
        \\
        &=
        \int_{-\infty}^{0}
        g(t)
        (t\Mat{I} - \Mat{A})^{-1}
        \bigl(
            \Mat{b}
            -
            (t\Mat{I} - \Mat{A})\hMat{x}_{m}(t)
        \bigr)
        \diff t
        \\
        &=
        \int_{-\infty}^{0}
        g(t)
        (t\Mat{I} - \Mat{A})^{-1}
        \hMat{r}_{m}(t)
        \diff t
        \\
        &=
        \int_{-\infty}^{0}
        g(t)
        (t\Mat{I} - \Mat{A})^{-1}
        \eta_{m}(t)\hMat{r}_{m}(0)
        \diff t
        \\
        &=
        \errfunc(\Mat{A})\hMat{r}_{m}(0),
    \end{aligned}
\end{equation*}
where
\(\errfunc(z)
=
\int_{-\infty}^{0}
g(t)\eta_{m}(t)(t - z)^{-1}
\diff t\).
Combining the above discussion, we obtain the convergence of the restarted sHmArnoldi method for computing \(f(\Mat{A})\Mat{b}\) when \(\Mat{A}\) is positive real and \(f\) is a Stieltjes function.

\begin{theorem}[Convergence of the restarted sHmArnoldi method]
\label{thm:sketched_harmonic_convergence}
    Let \(\Mat{A} \in \complex^{N \times N}\) be positive real, let \(\Mat{b} \in \complex^{N}\), and let \(f\) be a Stieltjes function of the form~\eqref{eq:stieltjes_func}.
    Assume that the sketched harmonic Ritz values satisfy \(\theta_{j}^{[\ell]} \in \RightHalfPlane\) for \(0 \leq \ell \leq k\) and \(1 \leq j \leq m\).
    Then the restarted sHmArnoldi approximation computed by Algorithm~\ref{alg:restart_skrylov} satisfies
    \begin{equation}
    \label{eq:sketched_harmonic_bound}
        \bigl\|
            f(\Mat{A})\Mat{b}
            -
            \hMat{f}^{[k]}
        \bigr\|_{\Mat{A}^{\herm}\Mat{A}}
        \leq
        C\|\Mat{b}\|
        \bigl(C_{\varepsilon}\alpha_{m}\bigr)^{k + 1},
    \end{equation}
    where
    \(C = \sqrt{\nu_{\max}}f(\rho\nu_{\max})\), and
    \(\alpha_{m}\) and \(C_{\varepsilon}\) are defined in Lemma~\ref{lem:sgmres_res_bound}.
    Here,
    \begin{equation}
    \label{eq:nu_max_rho}
        \begin{aligned}
            \nu_{\max}
            &=
            \max
            \bigl\{
                \Mat{v}^{\herm}\Mat{A}^{\herm}\Mat{A}\Mat{v}
                :
                \Mat{v} \in \complex^{N},
                \ \|\Mat{v}\| = 1
            \bigr\}
            =
            \|\Mat{A}\|^{2},
            \\
            \rho
            &=
            \min
            \bigl\{
                \Re\bigl(
                    \Mat{v}^{\herm}\Mat{A}^{-1}\Mat{v}
                \bigr)
                :
                \Mat{v} \in \complex^{N},
                \ \|\Mat{v}\| = 1
            \bigr\}.
        \end{aligned}
    \end{equation}
    Since \(\Mat{A}\) is positive real, we have \(\rho > 0\).
    Thus, \(\rho\nu_{\max} > 0\), and the representation~\eqref{eq:stieltjes_func} with \(g\) nonpositive implies that \(f(\rho\nu_{\max}) > 0\).
\end{theorem}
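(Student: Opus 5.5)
We follow the GMRES-based argument of~\cite[Theorem~6.5]{FGS14b} and adapt it cycle by cycle using Lemmas~\ref{lem:shm_arnoldi_res} and~\ref{lem:sgmres_res_bound}.

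\textbf{Step 1: the error as a Stieltjes integral.} By induction on the restart cycles, we show that after \(k\) restarts the error has the form
\begin{equation*}
    f(\Mat{A})\Mat{b} - \hMat{f}^{[k]}
    =
    \int_{-\infty}^{0}
    g(t)
    \Bigl(\prod_{\ell = 0}^{k}\eta_{m}^{[\ell]}(t)\Bigr)
    (t\Mat{I} - \Mat{A})^{-1}
    \hMat{r}^{[k]}_{m}(0)
    \diff t .
\end{equation*}
Here \(\hMat{r}^{[\ell]}_{m}(0)\) is the sGMRES residual of cycle \(\ell\). This residual is collinear with \(\basis_{m + 1}^{[\ell]}\) by~\eqref{eq:s_hm_arnoldi_res}, so it serves, after rescaling, as the starting vector of cycle \(\ell + 1\).

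The cycle-\(0\) case is the computation displayed just before the theorem. For the inductive step, cycle \(\ell + 1\) applies sHmArnoldi to the shifted systems \((t\Mat{I} - \Mat{A})\Mat{y} = \hMat{r}^{[\ell]}_{m}(0)\), weighted by \(g(t)\prod_{i\le\ell}\eta_{m}^{[i]}(t)\). Lemma~\ref{lem:shm_arnoldi_res}, applied in cycle \(\ell + 1\), then multiplies in a new factor \(\eta_{m}^{[\ell+1]}(t)\) and replaces the residual by \(\hMat{r}^{[\ell+1]}_{m}(0)\).

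We check that this matches Corollary~\ref{cor:err_arnoldi_like_approx_k_restart}. The product of the \(\gamma^{[\ell]}\beta^{[\ell]}/\phi^{[\ell]}_{m}(t)\) factors agrees with \(\prod_\ell \eta_{m}^{[\ell]}(t)\) times the scalings, because \(\hMat{r}_{m}(0) = \phi_{m}(\Mat{A})\Mat{b}/\phi_{m}(0)\).

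Applying Lemma~\ref{lem:sgmres_res_bound} in each cycle gives
\begin{equation*}
    \|\hMat{r}^{[\ell]}_{m}(0)\| \le C_\varepsilon\alpha_m\|\hMat{r}^{[\ell-1]}_{m}(0)\|,
\end{equation*}
and hence
\begin{equation*}
    \|\hMat{r}^{[k]}_{m}(0)\| \le (C_\varepsilon\alpha_m)^{k+1}\|\Mat{b}\|.
\end{equation*}
This uses the assumption that the embedding~\eqref{eq:subspace_embedding_property} holds for every cycle's Krylov space with the same \(\varepsilon\).

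\textbf{Step 2: bounding the rational factors.} Since \(\theta_j^{[\ell]} \in \RightHalfPlane\), for \(t \le 0\) we have
\begin{equation*}
    |1 - t/\theta| \ge \Re(1 - t/\theta) = 1 + |t|\,\Re(1/\theta) \ge 1.
\end{equation*}
Hence \(|\eta_{m}^{[\ell]}(t)| \le 1\). Because \(g \le 0\), the triangle inequality gives
\begin{equation*}
    \|f(\Mat{A})\Mat{b} - \hMat{f}^{[k]}\|_{\Mat{A}^\herm\Mat{A}}
    \le
    \int_{-\infty}^0 |g(t)|\,\|\Mat{A}(t\Mat{I}-\Mat{A})^{-1}\|\diff t\;\|\hMat{r}^{[k]}_{m}(0)\|.
\end{equation*}

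\textbf{Step 3: the resolvent estimate (main obstacle).} The remaining task is to show
\begin{equation*}
    \int_{-\infty}^0 |g(t)|\,\|\Mat{A}(t\Mat{I}-\Mat{A})^{-1}\|\diff t \le \sqrt{\nu_{\max}}\,f(\rho\nu_{\max}).
\end{equation*}
The plan is as follows. For a unit vector \(\Mat{w}\), set \(\Mat{u} = (\Mat{A} - t\Mat{I})^{-1}\Mat{w}\) and \(s = -t \ge 0\), so \((\Mat{A} + s\Mat{I})\Mat{u} = \Mat{w}\). Then
\begin{equation*}
    \Re(\Mat{u}^\herm\Mat{A}^\herm\Mat{w}) = \|\Mat{A}\Mat{u}\|^2 + s\,\Re(\Mat{u}^\herm\Mat{A}^\herm\Mat{u}).
\end{equation*}
We bound the second term below using
\begin{equation*}
    \Re(\Mat{v}^\herm\Mat{A}\Mat{v}) = \Re\bigl((\Mat{A}\Mat{v})^\herm\Mat{A}^{-1}(\Mat{A}\Mat{v})\bigr) \ge \rho\|\Mat{A}\Mat{v}\|^2,
\end{equation*}
which gives
\begin{equation*}
    \|\Mat{A}\Mat{u}\| \le 1/(1+s\rho) = \frac{1}{\rho}\cdot\frac{1}{1/\rho + s}.
\end{equation*}

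This bound is not yet in the stated form \(\sqrt{\nu_{\max}}f(\rho\nu_{\max})\). We would therefore refine it by bounding \(\|\Mat{A}\Mat{u}\|\) by \(\sqrt{\nu_{\max}}\,\|\Mat{u}\|\), and \(\|\Mat{u}\|\) through
\begin{equation*}
    \Re(\Mat{u}^\herm\Mat{A}\Mat{u}) \ge \rho\|\Mat{A}\Mat{u}\|^2 \ge \rho\nu_{\min}\|\Mat{u}\|^2 .
\end{equation*}
Following~\cite{FGS14b}, we would use the estimate \(\Re(\Mat{u}^\herm \Mat{A}\Mat{u}) \ge \rho \|\Mat{A}\Mat{u}\|^2\) to obtain
\begin{equation*}
    \|\Mat{A}(\Mat{A}+s\Mat{I})^{-1}\| \le \sqrt{\nu_{\max}}\,/(s+\rho\nu_{\max}).
\end{equation*}
Integrating this against \(|g(t)| = -g(t)\) yields
\begin{equation*}
    \sqrt{\nu_{\max}}\int_{-\infty}^0 \frac{-g(t)}{\rho\nu_{\max} - t}\diff t = \sqrt{\nu_{\max}}\,f(\rho\nu_{\max}).
\end{equation*}
Obtaining this exact form of the resolvent bound, with the constant \(\rho\nu_{\max}\), is the step we expect to be delicate. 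Our first attempt would be to copy the argument of~\cite[Theorem~6.5]{FGS14b} verbatim, since it involves only \(\Mat{A}\) and not the sketch.

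Combining Steps 1–3 gives~\eqref{eq:sketched_harmonic_bound}.
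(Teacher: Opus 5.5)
Your proposal is correct and follows the paper's proof essentially step for step: the error representation after $k$ restarts from Corollary~\ref{cor:err_arnoldi_like_approx_k_restart} and Lemma~\ref{lem:shm_arnoldi_res}, the bound $|\eta_{m}^{[\ell]}(t)| \leq 1$ from $\theta_{j}^{[\ell]} \in \RightHalfPlane$, the resolvent/integral estimate taken from~\cite[Theorem~6.5]{FGS14b}, and the residual recursion from Lemma~\ref{lem:sgmres_res_bound}.

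The Step~3 inequality you flag is exactly what the paper imports from~\cite{FGS14b}, and it does hold. Your $\nu_{\min}$ detour would only give the weaker constant $\sqrt{\nu_{\max}}f(\rho\nu_{\min})$, but the stated constant follows directly. Expand $1 = \|\Mat{w}\|^{2} = \|\Mat{A}\Mat{u}\|^{2} + 2s\Re(\Mat{u}^{\herm}\Mat{A}\Mat{u}) + s^{2}\|\Mat{u}\|^{2}$ and use three facts: $\Re(\Mat{u}^{\herm}\Mat{A}\Mat{u}) \geq \rho\|\Mat{A}\Mat{u}\|^{2}$, $\|\Mat{u}\| \geq \|\Mat{A}\Mat{u}\|/\sqrt{\nu_{\max}}$, and $\rho\sqrt{\nu_{\max}} \leq 1$. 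The last one holds by taking $\Mat{v} = \Mat{A}\Mat{w}_{0}/\|\Mat{A}\|$ with $\|\Mat{A}\Mat{w}_{0}\| = \|\Mat{A}\|$ in the definition of $\rho$. Together they give $1 \geq \|\Mat{A}\Mat{u}\|^{2}(\rho\sqrt{\nu_{\max}} + s/\sqrt{\nu_{\max}})^{2}$, which is precisely $\|\Mat{A}(\Mat{A} + s\Mat{I})^{-1}\| \leq \sqrt{\nu_{\max}}/(s + \rho\nu_{\max})$.
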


\begin{proof}
    By Lemma~\ref{lem:shm_arnoldi_res}, the residual in the \(\ell\)-th cycle satisfies
    \(\hMat{r}_{m}^{[\ell]}(t)
    =
    \hMat{r}_{m}^{[\ell]}(0)\eta_{m}^{[\ell]}(t)\).
    Following the argument in Corollary~\ref{cor:err_arnoldi_like_approx_k_restart}, the remaining error after \(k\) restart cycles can be written as
    \begin{equation*}
        f(\Mat{A})\Mat{b} - \hMat{f}^{[k]}
        =
        \biggl(
            \int_{-\infty}^{0}
            g(t)
            \prod_{\ell = 0}^{k}\eta_{m}^{[\ell]}(t)
            (t\Mat{I} - \Mat{A})^{-1}
            \diff t
        \biggr)
        \hMat{r}_{m}^{[k]}(0).
    \end{equation*}
    Since \(\Re(\theta_{j}^{[\ell]}) > 0\), we have
    \(\Re(-t/\theta_{j}^{[\ell]}) \geq 0\) for all \(t \leq 0\).
    Hence, \(|1 - t/\theta_{j}^{[\ell]}| \geq 1\), and
    \begin{equation*}
        |\eta_{m}^{[\ell]}(t)|
        =
        \prod_{j = 1}^{m}
        \frac{1}{|1 - t/\theta_{j}^{[\ell]}|}
        \leq
        1,
        \qquad
        0 \leq \ell \leq k,
        \quad
        t \leq 0.
    \end{equation*}
    Following the proof of~\cite[Theorem~6.5]{FGS14b} and using
    \(|\eta_{m}^{[\ell]}(t)| \leq 1\), we obtain
    \begin{equation*}
        \bigl\|
        f(\Mat{A})\Mat{b}
        -
        \hMat{f}^{[k]}
        \bigr\|_{\Mat{A}^{\herm}\Mat{A}}
        \leq
        \|\hMat{r}_{m}^{[k]}(0)\|
        \int_{-\infty}^{0}
        g(t)
        \frac{\sqrt{\nu_{\max}}}{t - \rho\nu_{\max}}
        \diff t
        =
        \|\hMat{r}_{m}^{[k]}(0)\|
        \sqrt{\nu_{\max}}
        f(\rho\nu_{\max}).
    \end{equation*}
    Here, we used the fact that \(g\) is nonpositive in the Stieltjes representation~\eqref{eq:stieltjes_func}.
    Finally, Lemma~\ref{lem:sgmres_res_bound} gives
    \(\|\hMat{r}_{m}^{[k]}(0)\|
    \leq
    C_{\varepsilon}\alpha_{m}
    \|\hMat{r}_{m}^{[k - 1]}(0)\|\).
    Applying this inequality recursively and defining
    \(C = \sqrt{\nu_{\max}}f(\rho\nu_{\max})\)
    completes the proof.
\end{proof}

Theorem~\ref{thm:sketched_harmonic_convergence} states that, if the sketching matrix \(\Mat{S}\) provides a sufficiently accurate embedding for the subspace spanned by \(\Basis_{m + 1}\) in each restart cycle, then the restarted sHmArnoldi method converges geometrically.
More precisely, this requires \(C_{\varepsilon}\alpha_{m} < 1\) and the sketched harmonic Ritz values to satisfy \(\theta_{j}^{[\ell]} \in \RightHalfPlane\).
Both conditions can be satisfied when the embedding distortion \(\varepsilon\) is sufficiently small.
While \(C_{\varepsilon}\) depends explicitly on \(\varepsilon\), the location of the sketched harmonic Ritz values is less transparent and also depends on the sketching matrix \(\Mat{S}\).
The following subsection investigates this issue.

\subsection{Sketched harmonic Ritz values}
\label{subsec:s_hm_ritz_values}
The convergence result in Theorem~\ref{thm:sketched_harmonic_convergence} assumes that all sketched harmonic Ritz values lie in the open right half-plane.
This subsection provides a sufficient condition under which this assumption holds.
The argument relies only on the orthogonality condition produced by the sHm-Arnoldi process, namely,
\(\basis_{m + 1} \perp_{\Mat{S}} \Mat{A}\Basis_{m}\).

Denote \(\ABasis_{m} = \Mat{A}\Basis_{m}\).
Since \(\basis_{m + 1} \perp_{\Mat{S}} \Mat{A}\Basis_{m}\), multiplying the Arnoldi-like decomposition~\eqref{eq:arnoldi_like_decomp} from the left by \((\Mat{S}\ABasis_{m})^{\herm}\Mat{S}\) gives
\begin{equation*}
    (\Mat{S}\ABasis_{m})^{\herm}
    (\Mat{S}\ABasis_{m})
    =
    (\Mat{S}\ABasis_{m})^{\herm}
    (\Mat{S}\Basis_{m})
    \Hessen_{m}.
\end{equation*}
Therefore,
\(\Hessen_{m} = \Mat{G}_{m}^{-1}\Mat{F}_{m}\), where
\(\Mat{G}_{m}
=
(\Mat{S}\ABasis_{m})^{\herm}(\Mat{S}\Basis_{m})\)
and
\(\Mat{F}_{m}
=
(\Mat{S}\ABasis_{m})^{\herm}(\Mat{S}\ABasis_{m})\),
provided that \(\Mat{G}_{m}\) is nonsingular.
Thus, the sketched harmonic Ritz values are the generalized eigenvalues of the matrix pencil \((\Mat{F}_{m},\Mat{G}_{m})\).

Let \(\theta \in \spec(\Hessen_{m})\), and let \(\Mat{y} \in \complex^{m}\) be a corresponding nonzero eigenvector.
Set \(\Mat{x} = \ABasis_{m}\Mat{y}\).
Then \(\Mat{x} \neq \Mat{0}\) and
\(\Mat{F}_{m}\Mat{y} = \Mat{G}_{m}\Mat{y}\theta\).
This gives
\begin{equation*}
    \theta
    =
    \frac{\Mat{y}^{\herm}\Mat{F}_{m}\Mat{y}}
         {\Mat{y}^{\herm}\Mat{G}_{m}\Mat{y}}
    =
    \frac{\|\Mat{S}\Mat{x}\|^{2}}
         {(\Mat{S}\Mat{x})^{\herm}
          (\Mat{S}\Mat{A}^{-1}\Mat{x})}.
\end{equation*}
Equivalently,
\begin{equation} \label{eq:recip_s_hm_ritz_value}
    \frac{1}{\theta}
    =
    \frac{(\Mat{S}\Mat{x})^{\herm}
          (\Mat{S}\Mat{A}^{-1}\Mat{x})}
         {\|\Mat{S}\Mat{x}\|^{2}}.
\end{equation}

Taking \(\Mat{u} = \Mat{A}^{-1}\Mat{x} = \Basis_{m}\Mat{y}\) and \(\Mat{v} = \Mat{x} = \Mat{A}\Basis_{m}\Mat{y}\) in the inner-product form of the subspace embedding property~\eqref{eq:subspace_embedding_property_inner_product}, we obtain
\begin{equation*}
    \bigl|
    (\Mat{S}\Mat{x})^{\herm}
    (\Mat{S}\Mat{A}^{-1}\Mat{x})
    -
    \Mat{x}^{\herm}\Mat{A}^{-1}\Mat{x}
    \bigr|
    \leq
    \varepsilon
    \|\Mat{A}^{-1}\Mat{x}\|
    \|\Mat{x}\|
    \leq
    \varepsilon
    \|\Mat{A}^{-1}\|
    \|\Mat{x}\|^{2}.
\end{equation*}
Combining this estimate with the subspace embedding property~\eqref{eq:subspace_embedding_property} and the identity~\eqref{eq:recip_s_hm_ritz_value}, we conclude that the reciprocal of every sketched harmonic Ritz value satisfies
\begin{equation*}
    \frac{1}{\theta}
    \in
    \biggl\{
        \frac{z_{1} + z_{2}}{\tau}
        :
        z_{1} \in \numrange(\Mat{A}^{-1}),\
        |z_{2}| \leq \varepsilon\|\Mat{A}^{-1}\|,\
        1 - \varepsilon \leq \tau \leq 1 + \varepsilon
    \biggr\}.
\end{equation*}

Since \(\Mat{A}\) is positive real, \(\Mat{A}^{-1}\) is also positive real.
Let \(\rho\) be defined as in~\eqref{eq:nu_max_rho}.
If
\(\varepsilon\|\Mat{A}^{-1}\| < \rho\),
then, for any sketched harmonic Ritz value \(\theta\), the preceding inclusion gives
\begin{equation*}
    \Re\biggl(\frac{1}{\theta}\biggr)
    \geq
    \frac{\rho - \varepsilon\|\Mat{A}^{-1}\|}
         {1 + \varepsilon}
    >
    0.
\end{equation*}
Since \(\Re(1/\theta) > 0\) is equivalent to \(\Re(\theta) > 0\), all sketched harmonic Ritz values lie in the open right half-plane.
Moreover, under the condition
\(\varepsilon\|\Mat{A}^{-1}\| < \rho\), for every nonzero
\(\Mat{y} \in \complex^{m}\), with
\(\Mat{x} = \Mat{A}\Basis_{m}\Mat{y}\), we have
\begin{equation*}
  \Re\bigl(\Mat{y}^{\herm}\Mat{G}_{m}\Mat{y}\bigr)
  =
  \Re\bigl((\Mat{S}\Mat{x})^{\herm}
  (\Mat{S}\Mat{A}^{-1}\Mat{x})\bigr)
  \geq
  \bigl(\rho-\varepsilon\|\Mat{A}^{-1}\|\bigr)\|\Mat{x}\|^{2}
  > 0.
\end{equation*}
Hence, \(\Mat{G}_{m}\) is nonsingular, and the condition
\(\varepsilon\|\Mat{A}^{-1}\| < \rho\)
is sufficient for the sketched harmonic Ritz values generated by the sHm-Arnoldi process to lie in \(\RightHalfPlane\), as required in Theorem~\ref{thm:sketched_harmonic_convergence}.

\section{Numerical experiments}
\label{sec:numerical_experiments}

In this section, we present numerical experiments to demonstrate the stability and efficiency of the proposed sketch-and-restart algorithms.
The experiments illustrate the computational savings achieved through sketching, the storage reduction obtained by the adaptive \(t\)-Arnoldi process, and the effectiveness of thick restarting when aggressive truncation slows convergence.
All algorithms are implemented in MATLAB R2023b, and all experiments are performed on a server equipped with two Intel Xeon Gold 6226R CPUs running at 2.90 GHz and 1000.6 GB of RAM.
The code for reproducing the numerical experiments is available at \url{https://github.com/JingyuLiuMath/funm_quad}.

We use a sparse sign matrix~\cite{Martinsson_Tropp_2020} as the sketching matrix in all examples.
Specifically, the matrix is defined as
\begin{equation*}
    \Mat{S}
    =
    \sqrt{\frac{1}{\zeta}}
    \begin{bmatrix}
        \Mat{s}_{1} & \cdots & \Mat{s}_{N}
    \end{bmatrix}
    \in \complex^{s \times N},
\end{equation*}
where \(\zeta\) is the sparsity parameter.
Each vector \(\Mat{s}_{i}\) is constructed by selecting \(\zeta\) entries uniformly at random and assigning them the values \(\pm 1\) with equal probability.
Larger values of \(\zeta\) generally improve the embedding quality but increase the computational cost, whereas smaller values may lead to instability.
Following the recommendation in~\cite{Martinsson_Tropp_2020}, we set \(\zeta = \min\{s,8\}\).
When the Krylov subspace dimension \(m\) is fixed, we set the sketching dimension to \(s = 2m\).
Applying this sketching matrix to a vector therefore requires only \(\bigO(N)\) operations.

The quadrature tolerance and convergence tolerance are set to \(10^{-9}\) and \(10^{-7}\), respectively, and the maximum number of quadrature points is set to \(512\) in all examples.
The relative error is computed using the exact solution.
For Algorithm~\ref{alg:restart_skrylov}, we denote the variants based on the s-Arnoldi and sHm-Arnoldi processes by sArn and sHmArn, respectively.
For comparison, we also use the quadrature-based restarted Arnoldi method of~\cite{Frommer_Guettel_Schweitzer_2014}, which generates an orthonormal basis using the standard Arnoldi process and computes the Arnoldi approximation in each restart cycle.
We denote this method by Arn.

For Algorithm~\ref{alg:restart_krylov_adaptive}, the initial maximum Krylov subspace dimension is set to \(m_{\max} = m\).
In the \(t\)-Arnoldi process, we use \(t \in \{2,1,0\}\), the condition number threshold \(\tau = 10^{6}\), the sketching dimension increment \(s_{0} = 30\), and the sketching dimension control parameter \(\eta = 2\).
At the end of each \(t\)-Arnoldi process, a rank-\(1\) update is applied to enforce either \(\basis_{m + 1} \perp \Basis_{m}\) or \(\basis_{m + 1} \perp \Mat{A}\Basis_{m}\).
We denote the corresponding methods by adaArn-\(t\) and adaHmArn-\(t\), respectively.

Finally, we use \(n_{\iter}\), \(n_{\matvec}\), and \(n_{\vecs}\) to denote the number of restart cycles, the total number of matrix-vector products, and the maximum number of vectors stored simultaneously, respectively.

\subsection{Convection-diffusion example}
\label{subsec:conv_diff}

In this example, \(\Mat{A}\) is obtained by discretizing a two-dimensional convection-diffusion operator on the unit square with a constant convection field in the direction \([1,-1]\) and diffusion coefficient \(D = 10^{-3}\).
More precisely,
\begin{equation*}
    \Mat{A}
    =
    \frac{D}{h^{2}}
    \bigl(
        \Mat{L} \otimes \Mat{I}
        +
        \Mat{I} \otimes \Mat{L}
    \bigr)
    +
    \frac{1}{h}
    \bigl(
        \Mat{C} \otimes \Mat{I}
        +
        \Mat{I} \otimes \Mat{C}^{\trans}
    \bigr)
    \in
    \real^{n^{2} \times n^{2}},
\end{equation*}
where \(h = 1/(n + 1)\), \(\Mat{L} \in \real^{n \times n}\) is the tridiagonal matrix with diagonal entries \(2\) and sub- and superdiagonal entries \(-1\), and \(\Mat{C} \in \real^{n \times n}\) is the matrix with diagonal entries \(1\) and subdiagonal entries \(-1\).
We choose \(n = 100\), so that \(N = 10{,}000\), and compute \(f(\Mat{A})\Mat{b}\) for \(f(z) = z^{-1/2}\), where \(\Mat{b}\) is the vector of all ones normalized to unit norm.
The Krylov subspace dimension is set to \(m = 150\).

\begin{table}[tbhp]
\centering
\begin{tabular}{cccccc}
\toprule
method & error & time (s) & \(n_{\iter}\) & \(n_{\matvec}\) & \(n_{\vecs}\)\\ 
\midrule
Arn & 1.5e-08 & 1.7e+00 & 3 & 450 & 150 \\ 
\midrule
sArn & 3.1e-09 & 1.2e+00 & 4 & 600 & 150 \\ 
\midrule
sHmArn & 3.9e-09 & 1.2e+00 & 4 & 600 & 150 \\ 
\midrule
adaArn-\(2\) & 1.5e-08 & 1.3e+00 & 3 & 450 & 150 \\ 
\midrule
adaArn-\(1\) & 4.4e-08 & 8.3e-01 & 5 & 420 & 90 \\ 
\midrule
adaArn-\(0\) & 2.8e-08 & 4.1e-01 & 19 & 263 & 15 \\ 
\midrule
adaHmArn-\(2\) & 4.5e-08 & 1.2e+00 & 3 & 450 & 150 \\ 
\midrule
adaHmArn-\(1\) & 1.4e-08 & 7.3e-01 & 4 & 355 & 90 \\ 
\midrule
adaHmArn-\(0\) & 3.3e-08 & 4.1e-01 & 18 & 251 & 15 \\ 
\bottomrule
\end{tabular}
\caption{Relative error, runtime, number of restart cycles, number of matrix-vector products, and maximum number of stored vectors for the inverse square root function in the convection-diffusion example.}
\label{tab:conv_diff}
\end{table}

\begin{figure}[htbp]
    \centering
    \includegraphics[width=.55\linewidth]{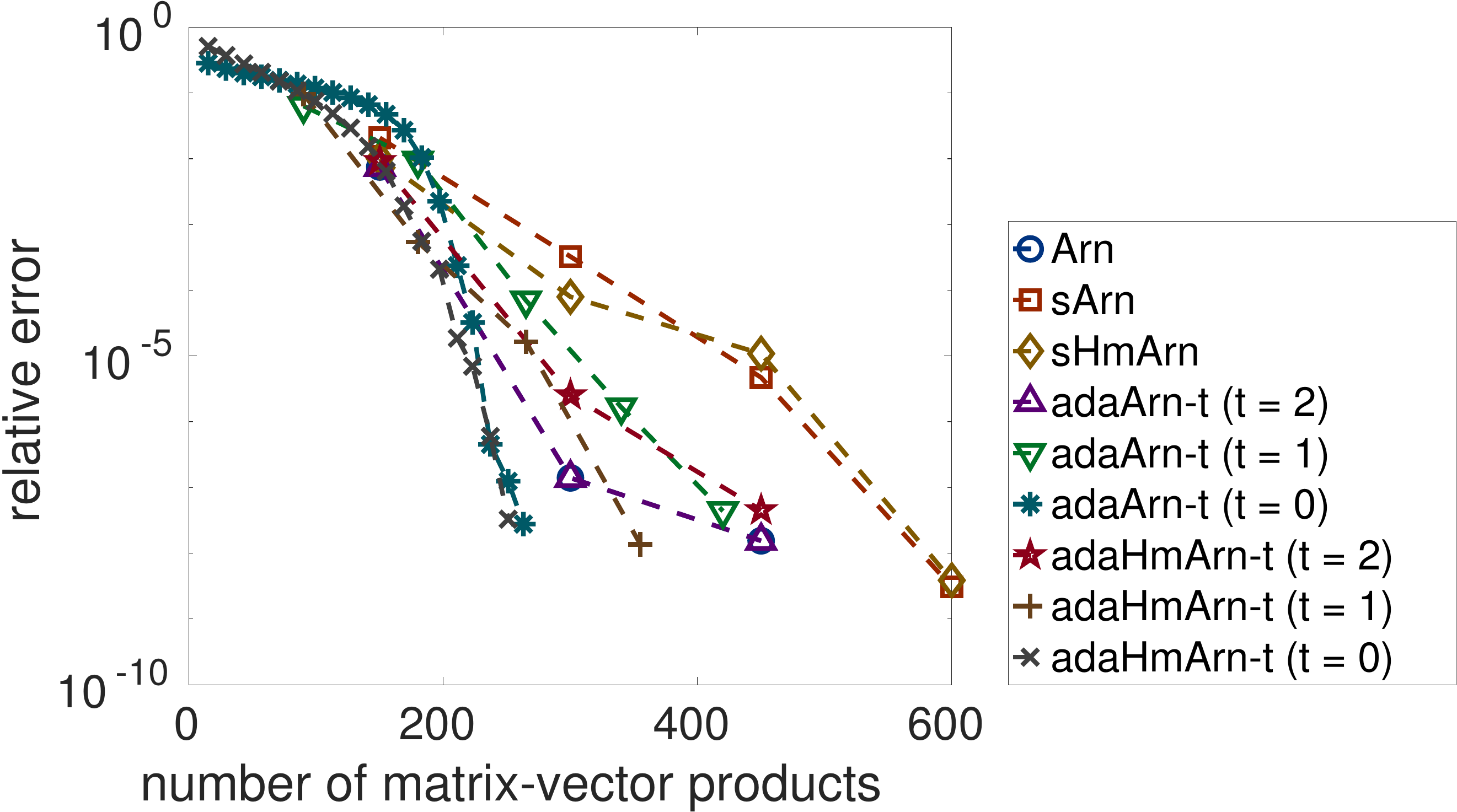}
    \caption{Relative error of the approximation versus the number of matrix-vector products in the convection-diffusion example.}
    \label{fig:conv_diff}
\end{figure}

Table~\ref{tab:conv_diff} and Figure~\ref{fig:conv_diff} show that all methods attain the prescribed accuracy, with final relative errors of the same order of magnitude.
Among the fixed-dimension methods, sArn and sHmArn require four restart cycles and \(600\) matrix-vector products, compared with three cycles and \(450\) matrix-vector products for Arn.
Nevertheless, both sketched methods reduce the runtime from \(1.7\) seconds to \(1.2\) seconds.
Thus, the lower cost of orthogonalization in the sketched space more than compensates for the additional restart cycle and matrix-vector products.

For both adaptive variants, decreasing the truncation parameter \(t\) substantially reduces the storage requirement, from \(150\) vectors for \(t = 2\) to \(90\) for \(t = 1\) and \(15\) for \(t = 0\).
In this example, more aggressive truncation also reduces both the runtime and the total number of matrix-vector products.
In particular, adaArn-0 and adaHmArn-0 require only \(263\) and \(251\) matrix-vector products, respectively, and both complete in \(0.41\) seconds while maintaining relative errors below the convergence tolerance.
The results therefore indicate that, for this problem, aggressive truncation provides substantial savings in both storage and computational cost, despite increasing the number of restart cycles.


\subsection{Lattice quantum chromodynamics example}
\label{subsec:qcd}

This example arises from lattice quantum chromodynamics~(QCD), where one needs to solve linear systems involving the overlap Dirac operator~\cite{Neuberger_1998},
\begin{equation*}
    \Mat{N}_{\mathrm{ovl}}
    =
    \rho\Mat{I}
    +
    \gamma_{5}\sign(\Mat{Q}).
\end{equation*}
Here, \(\rho > 1\), \(\gamma_{5}\) is a permutation matrix, and \(\Mat{Q}\) is a large sparse matrix that becomes non-Hermitian in the presence of a nonzero chemical potential.
Since forming \(\sign(\Mat{Q})\) explicitly is infeasible, its action on a vector is computed as
\begin{equation*}
    \sign(\Mat{Q})\Mat{b}
    =
    (\Mat{Q}^{2})^{-1/2}\Mat{Q}\Mat{b}.
\end{equation*}
In our experiment, \(N = 49{,}152\), \(\Mat{b} = \Mat{e}_{1}\), and \(m = 150\).

\begin{table}[tbhp]
\centering
\begin{tabular}{cccccc}
\toprule
method & error & time (s) & \(n_{\iter}\) & \(n_{\matvec}\) & \(n_{\vecs}\)\\ 
\midrule
Arn & 4.7e-10 & 1.4e+01 & 3 & 450 & 150 \\ 
\midrule
sArn & 1.3e-09 & 1.3e+01 & 3 & 450 & 150 \\ 
\midrule
sHmArn & 1.6e-09 & 1.3e+01 & 3 & 450 & 150 \\ 
\midrule
adaArn-\(2\) & 7.8e-09 & 1.2e+01 & 4 & 410 & 105 \\ 
\midrule
adaArn-\(1\) & 9.9e-08 & 1.5e+01 & 40 & 722 & 20 \\ 
\midrule
adaArn-\(0\) & 9.1e-08 & 1.9e+01 & 106 & 964 & 10 \\ 
\midrule
adaHmArn-\(2\) & 1.2e-08 & 1.2e+01 & 4 & 415 & 105 \\ 
\midrule
adaHmArn-\(1\) & 9.6e-08 & 1.6e+01 & 41 & 740 & 20 \\ 
\midrule
adaHmArn-\(0\) & 9.8e-08 & 2.8e+01 & 142 & 1279 & 10 \\ 
\bottomrule
\end{tabular}
\caption{Relative error, runtime, number of restart cycles, number of matrix-vector products, and maximum number of stored vectors for the inverse square root function in the QCD example.}
\label{tab:qcd}
\end{table}

\begin{figure}[htbp]
    \centering
    \includegraphics[width=.55\linewidth]{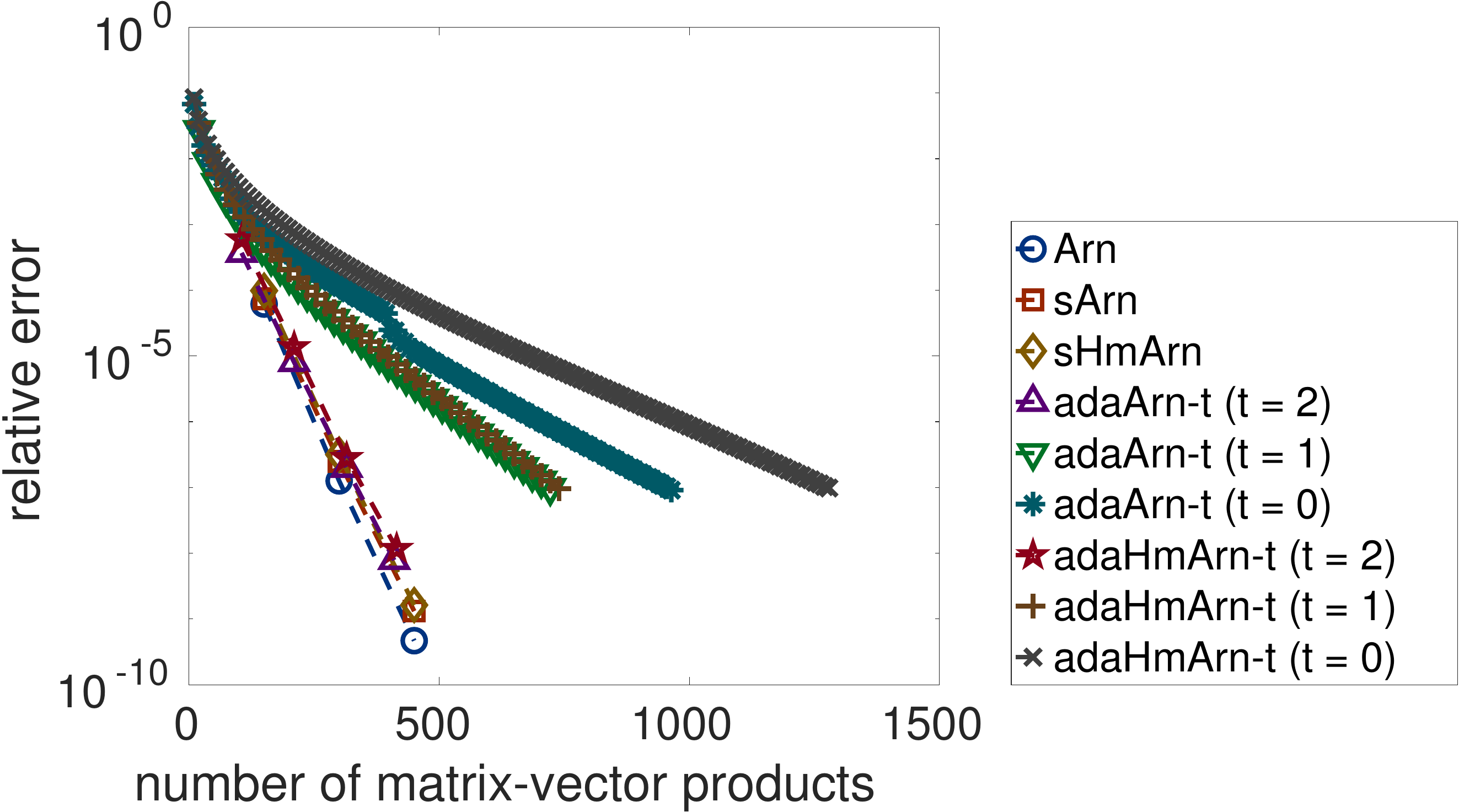}
    \caption{Relative error of the approximation versus the number of matrix-vector products in the QCD example.}
    \label{fig:qcd}
\end{figure}

Table~\ref{tab:qcd} and Figure~\ref{fig:qcd} show that all methods attain the prescribed accuracy.
The behavior in this example differs from the convection-diffusion problem.
Among the fixed-dimension methods, sArn and sHmArn require the same number of restart cycles and matrix-vector products as Arn, but reduce the runtime from \(14\) seconds to \(13\) seconds.
This again illustrates the lower per-cycle cost of the sketched orthogonalization process.

For the adaptive methods, \(t = 2\) provides the best overall performance in this example.
The methods adaArn-2 and adaHmArn-2 store at most \(105\) vectors, compared with \(150\) for the fixed-dimension methods, while requiring only \(410\) and \(415\) matrix-vector products, respectively.
Their runtimes are both \(12\) seconds, which are the smallest among all methods considered.
Further decreasing \(t\) substantially reduces the storage requirement, to \(20\) vectors for \(t = 1\) and \(10\) vectors for \(t = 0\), but leads to a pronounced deterioration in convergence.
In particular, the number of restart cycles increases to \(40\)--\(41\) for \(t = 1\) and to \(106\)--\(142\) for \(t = 0\), accompanied by a corresponding increase in the number of matrix-vector products and runtime.
Thus, for this QCD example, \(t = 2\) offers the best balance among storage, matrix-vector products, and runtime.

\begin{figure}[htbp]
    \centering
    \begin{subfigure}{0.48\textwidth}
        \centering
        \includegraphics[width=\linewidth]{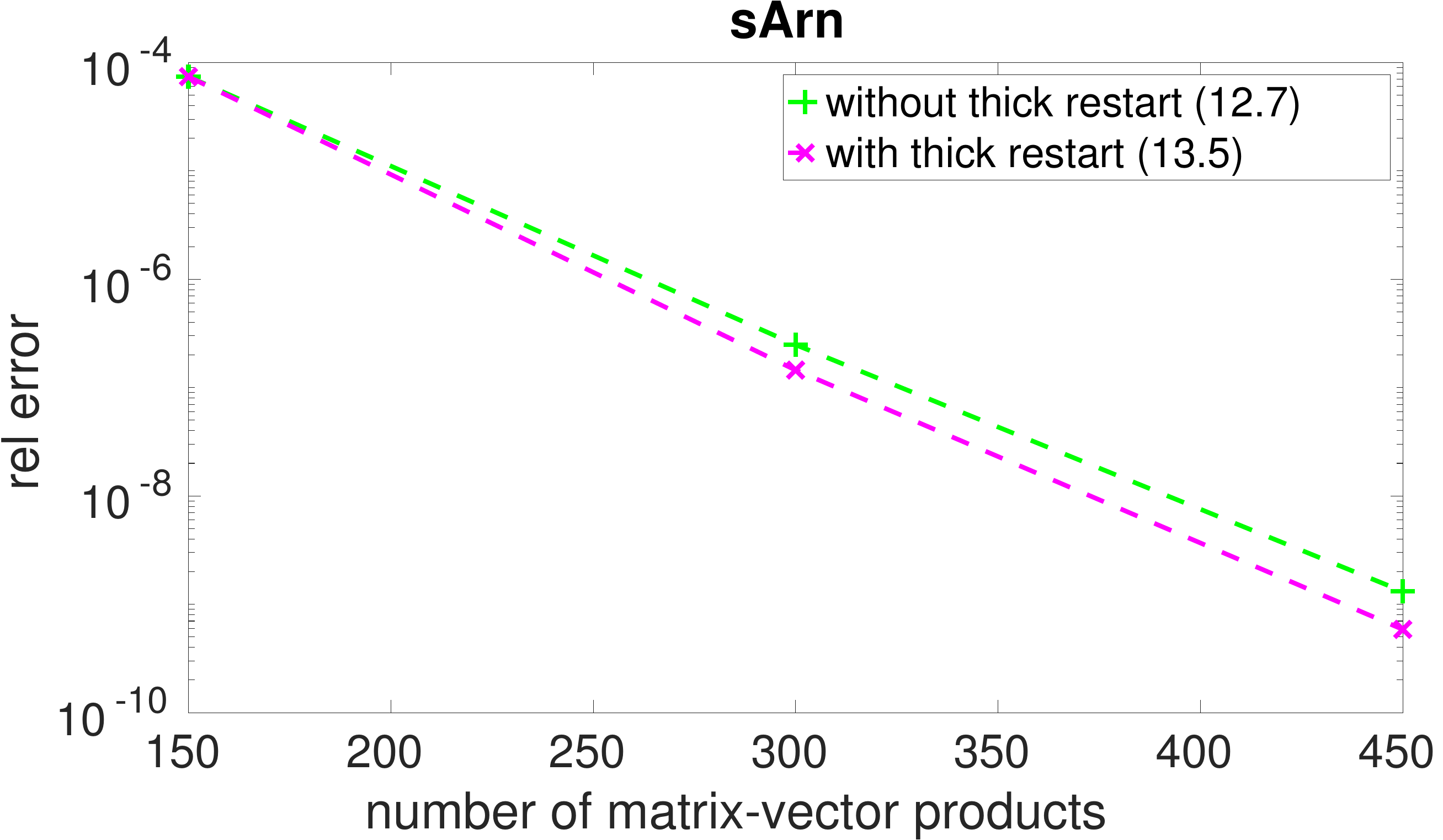}
    \end{subfigure}
    \begin{subfigure}{0.48\textwidth}
        \centering
        \includegraphics[width=\linewidth]{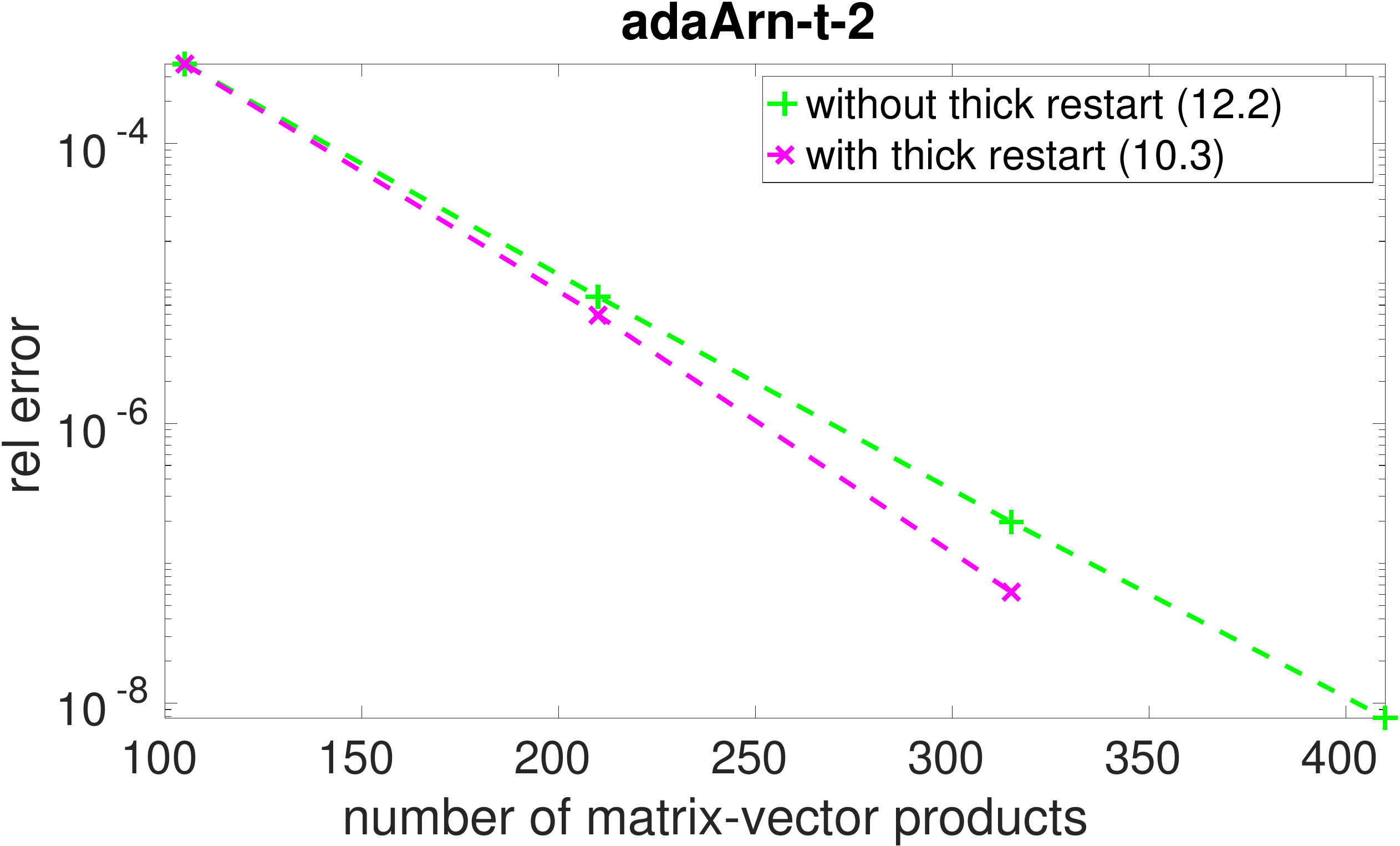}
    \end{subfigure}
    
    \medskip
    
    \begin{subfigure}{0.48\textwidth}
        \centering
        \includegraphics[width=\linewidth]{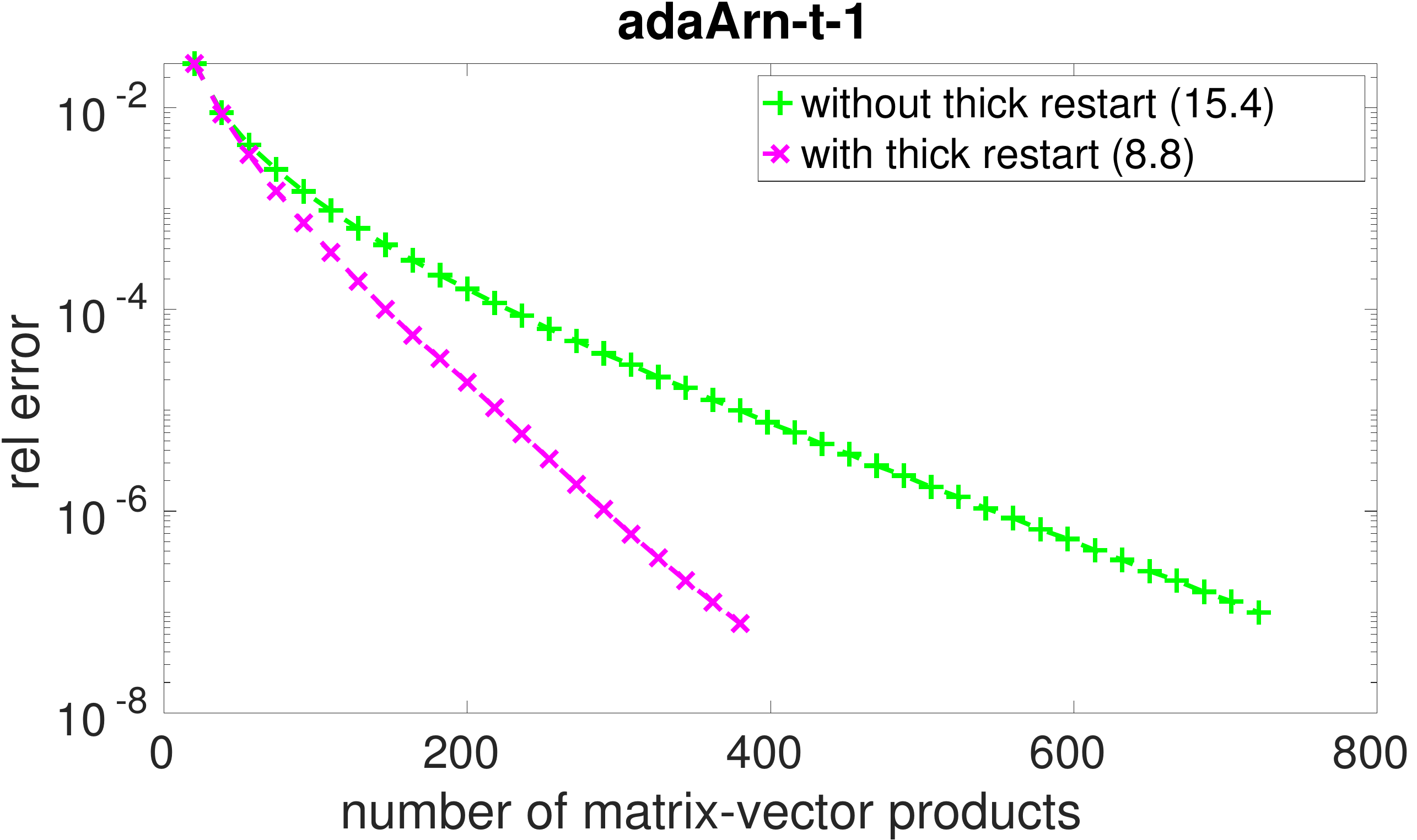}
    \end{subfigure}
    \begin{subfigure}{0.48\textwidth}
        \centering
        \includegraphics[width=\linewidth]{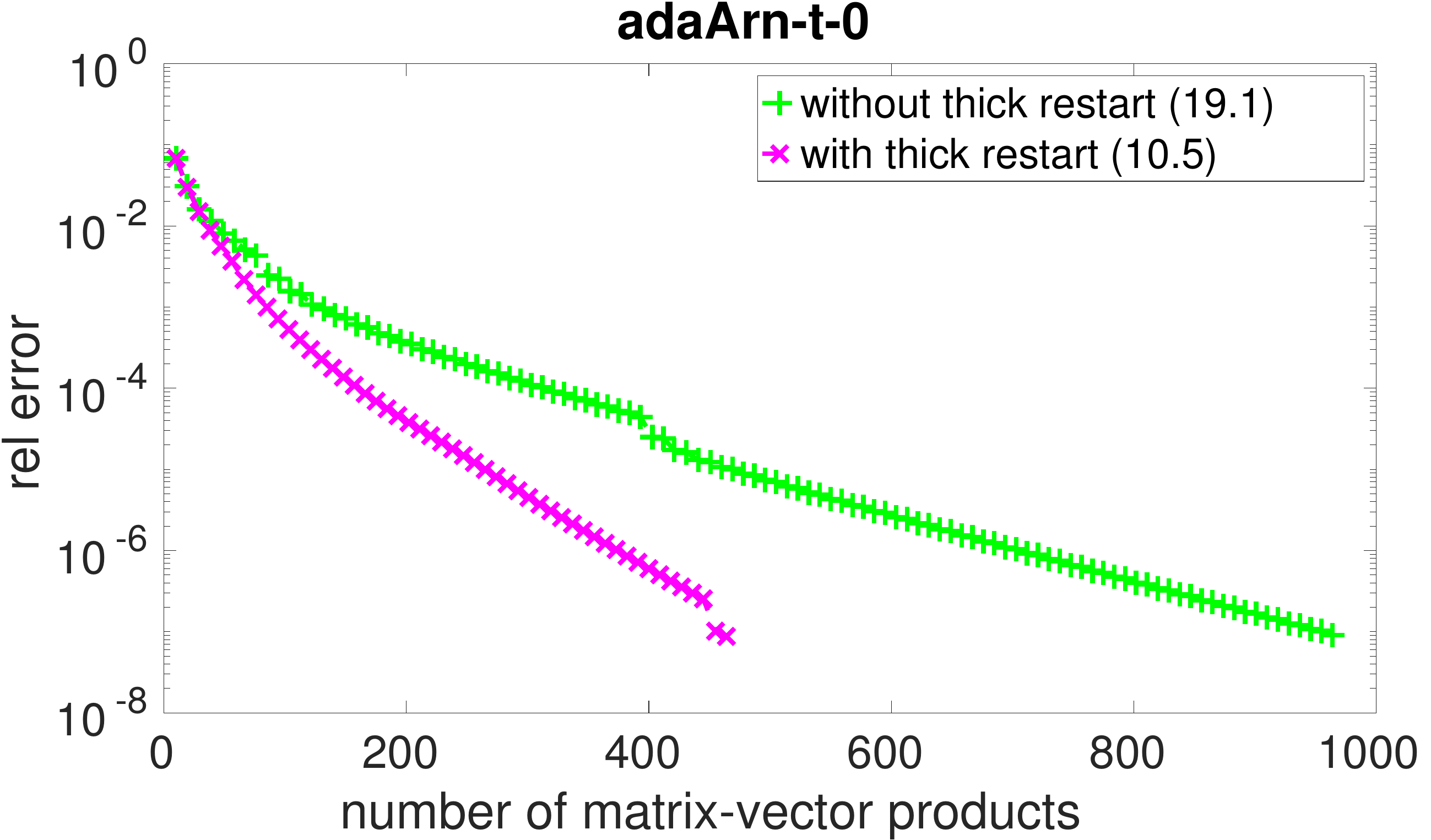}
    \end{subfigure}
    \caption{Thick restarting on the QCD example.
    The runtime of each method is also shown in the legend.
    }
    \label{fig:qcd_thick_restart}
\end{figure}

We also apply thick restarting~\cite{Eiermann_Ernst_Guettel_2011} to sArn and the adaptive Arnoldi-type methods, retaining the Ritz vectors associated with the \(5\) Ritz values closest to the origin.
As shown in Figure~\ref{fig:qcd_thick_restart}, thick restarting has little effect on sArn, whose convergence is already fast; it slightly increases the runtime from \(12.7\) to \(13.5\) seconds.
In contrast, thick restarting accelerates all three adaptive methods, with increasingly pronounced benefits as \(t\) decreases.
For adaArn-2, the runtime decreases from \(12.2\) to \(10.3\) seconds.
The improvement is more substantial for adaArn-1 and adaArn-0, whose runtimes decrease from \(15.4\) to \(8.8\) seconds and from \(19.1\) to \(10.5\) seconds, respectively.
These results show that thick restarting effectively mitigates the slower convergence caused by aggressive truncation while retaining the storage savings of the adaptive methods.


\subsection{Fractional graph Laplacian example}
\label{subsec:frac_laplacian}

In this example, \(\Mat{A} \in \real^{6301 \times 6301}\) is the nonsymmetric adjacency matrix of the \texttt{p2p\_Gnutella08} network from the SuiteSparse Matrix Collection~\cite{Davis_Hu_2011}.
We define the in-degree graph Laplacian by
\(\Mat{L} = \Mat{D} - \Mat{A}\).
The matrix \(\Mat{L}\) is singular, and its spectrum lies in the closed right half-plane.
We interpret the inverse square root as acting on the invariant subspace associated with the nonzero spectrum and compute
\begin{equation*}
    \Mat{L}^{1/2}\Mat{b}
    =
    \Mat{L}^{-1/2}\Mat{L}\Mat{b}.
\end{equation*}
Since \(\Mat{L}\Mat{b} \in \range(\Mat{L})\), the Krylov subspace methods considered here can be applied to this inverse-square-root action.
We set \(m = 100\).

\begin{table}[tbhp]
\centering
\begin{tabular}{cccccc}
\toprule
method & error & time (s) & \(n_{\iter}\) & \(n_{\matvec}\) & \(n_{\vecs}\)\\ 
\midrule
Arn & 4.3e-08 & 8.4e-01 & 3 & 300 & 100 \\ 
\midrule
sArn & 4.3e-08 & 4.3e-01 & 3 & 300 & 100 \\ 
\midrule
sHmArn & 4.3e-08 & 4.3e-01 & 3 & 300 & 100 \\ 
\midrule
adaArn-\(2\) & 8.0e-08 & 4.4e-01 & 8 & 355 & 45 \\ 
\midrule
adaArn-\(1\) & 5.5e-08 & 6.2e-01 & 33 & 606 & 30 \\ 
\midrule
adaArn-\(0\) & 9.9e-08 & 8.3e-01 & 78 & 785 & 15 \\ 
\midrule
adaHmArn-\(2\) & 6.6e-08 & 4.1e-01 & 9 & 395 & 45 \\ 
\midrule
adaHmArn-\(1\) & 9.2e-08 & 4.7e-01 & 25 & 495 & 30 \\ 
\midrule
adaHmArn-\(0\) & 8.7e-08 & 8.1e-01 & 74 & 745 & 15 \\ 
\bottomrule
\end{tabular}
\caption{Relative error, runtime, number of restart cycles, number of matrix-vector products, and maximum number of stored vectors for the inverse square root function in the fractional graph Laplacian example.}
\label{tab:invsqrt_frac_laplacian}
\end{table}

\begin{figure}[htbp]
    \centering
\includegraphics[width=.55\linewidth]{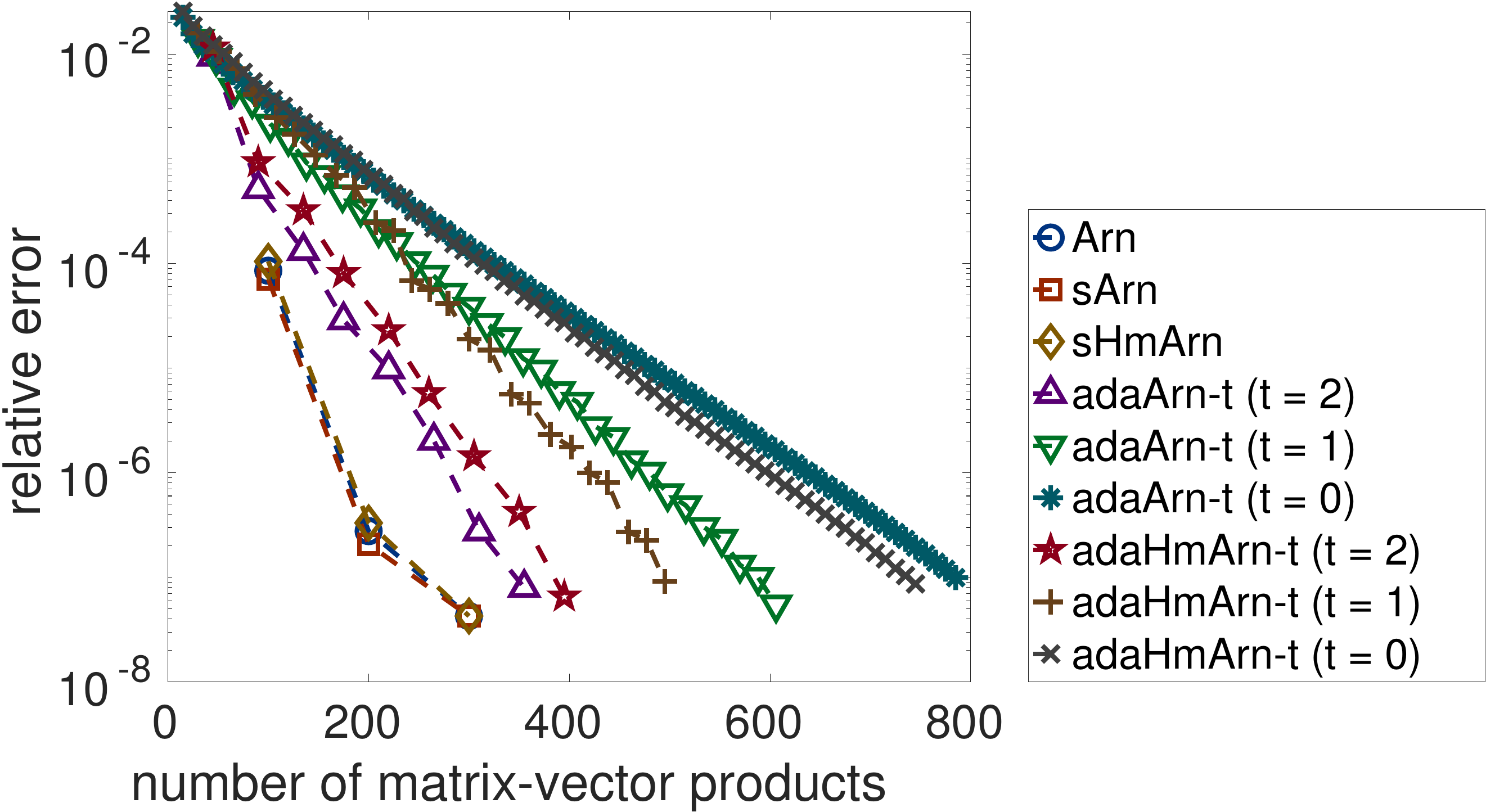}
    \caption{Relative error of the approximation versus the number of matrix-vector products in the fractional graph Laplacian example.}
    \label{fig:frac_laplacian}
\end{figure}

Table~\ref{tab:invsqrt_frac_laplacian} and Figure~\ref{fig:frac_laplacian} show that all methods attain the prescribed accuracy.
Among the fixed-dimension methods, sArn and sHmArn require the same number of restart cycles and matrix-vector products as Arn, but reduce the runtime from \(0.84\) seconds to \(0.43\) seconds.
Thus, in this example, sketching nearly halves the computational time without affecting the convergence behavior.

Among the adaptive methods, \(t = 2\) provides the best overall performance.
The methods adaArn-2 and adaHmArn-2 store at most \(45\) vectors, compared with \(100\) for the fixed-dimension methods, while requiring \(355\) and \(395\) matrix-vector products, respectively.
Their runtimes, \(0.44\) and \(0.41\) seconds, are comparable to those of sArn and sHmArn.
Further decreasing \(t\) reduces the storage requirement to \(30\) vectors for \(t = 1\) and \(15\) vectors for \(t = 0\), but substantially slows convergence.
In particular, both the number of restart cycles and the number of matrix-vector products increase markedly as \(t\) decreases.
Hence, for this example, \(t = 2\) offers the best compromise among storage reduction, convergence speed, and runtime.

\begin{figure}[htbp]
    \centering
    \begin{subfigure}{0.48\textwidth}
        \centering
        \includegraphics[width=\linewidth]{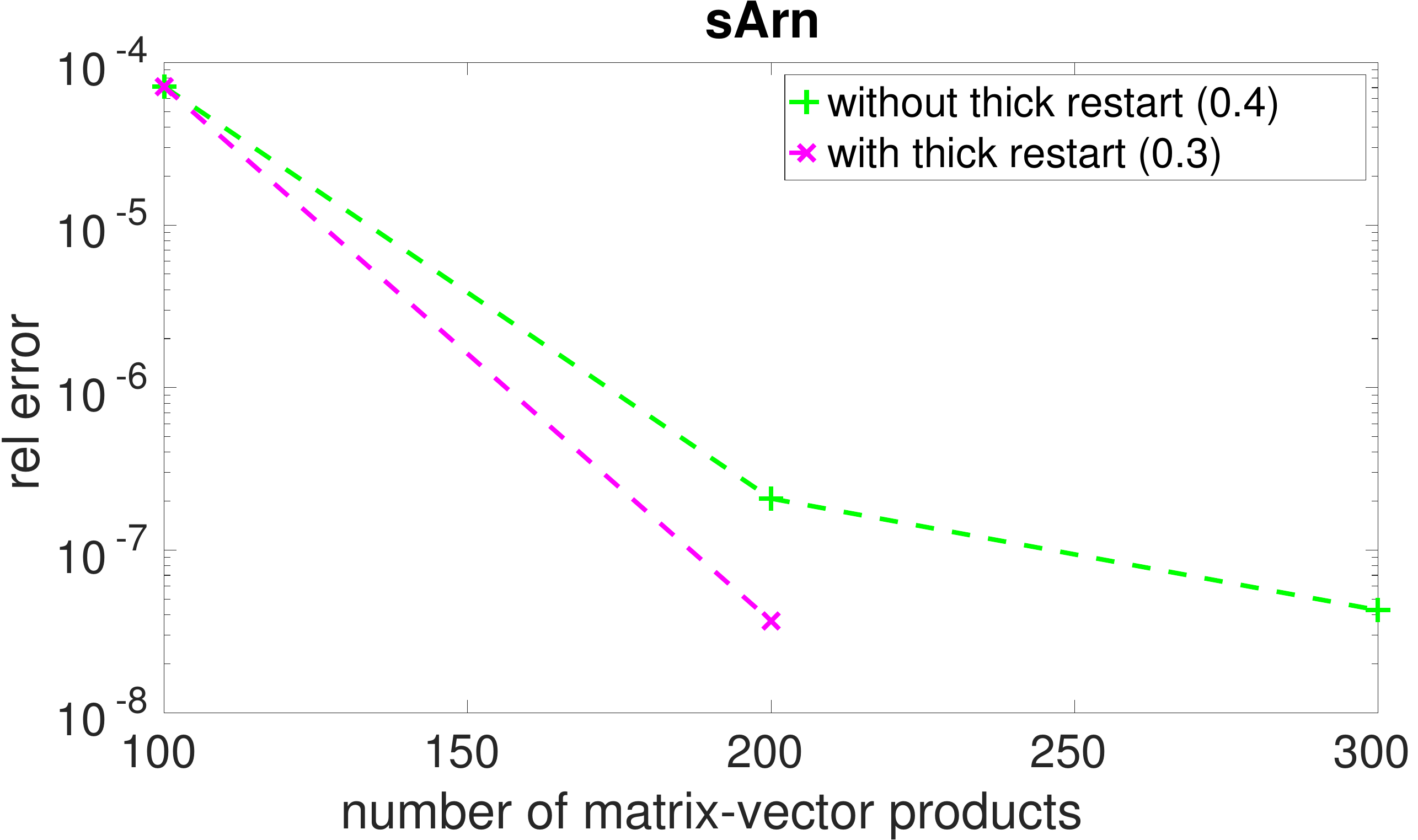}
    \end{subfigure}
    \begin{subfigure}{0.48\textwidth}
        \centering
        \includegraphics[width=\linewidth]{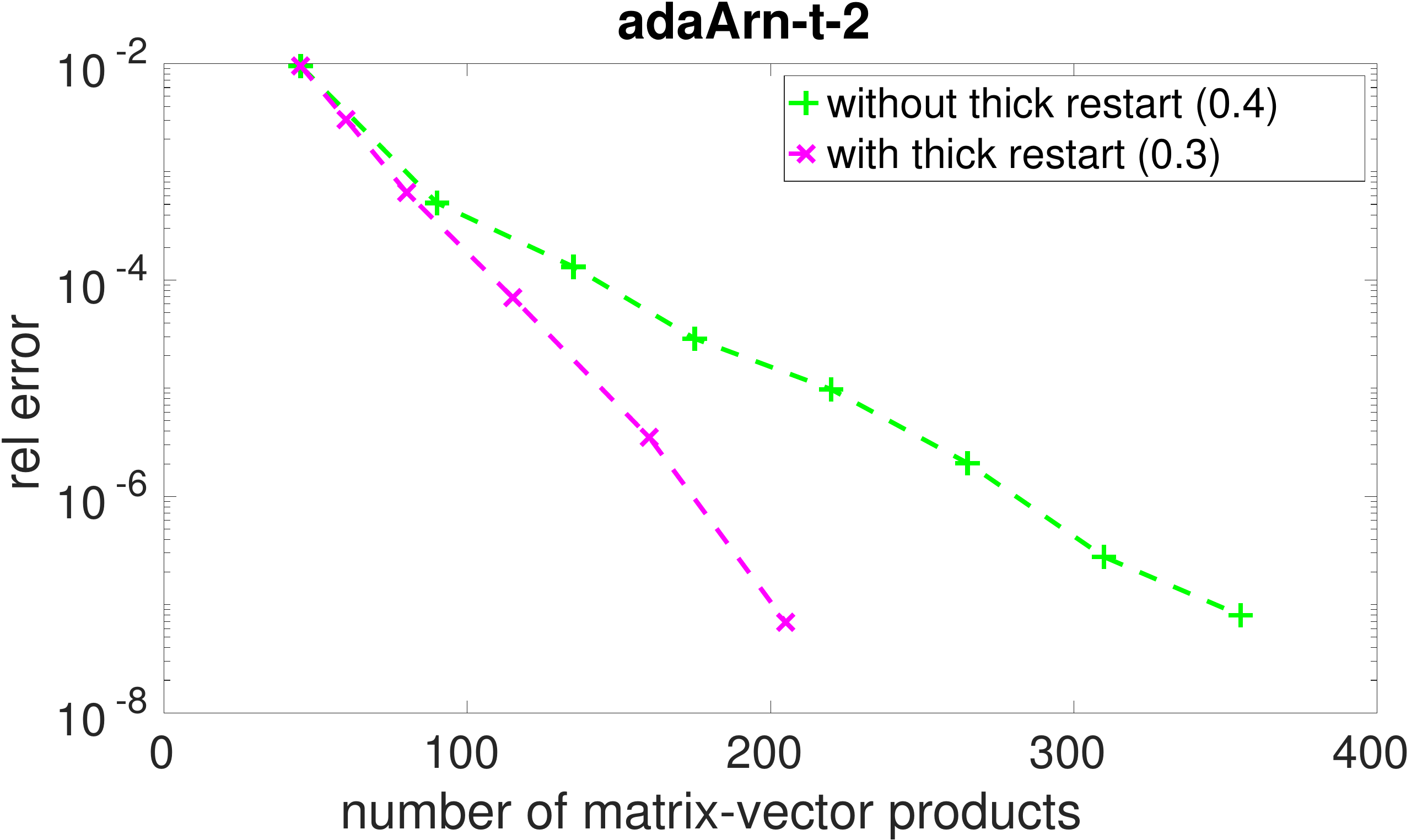}
    \end{subfigure}
    \begin{subfigure}{0.48\textwidth}
        \centering
        \includegraphics[width=\linewidth]{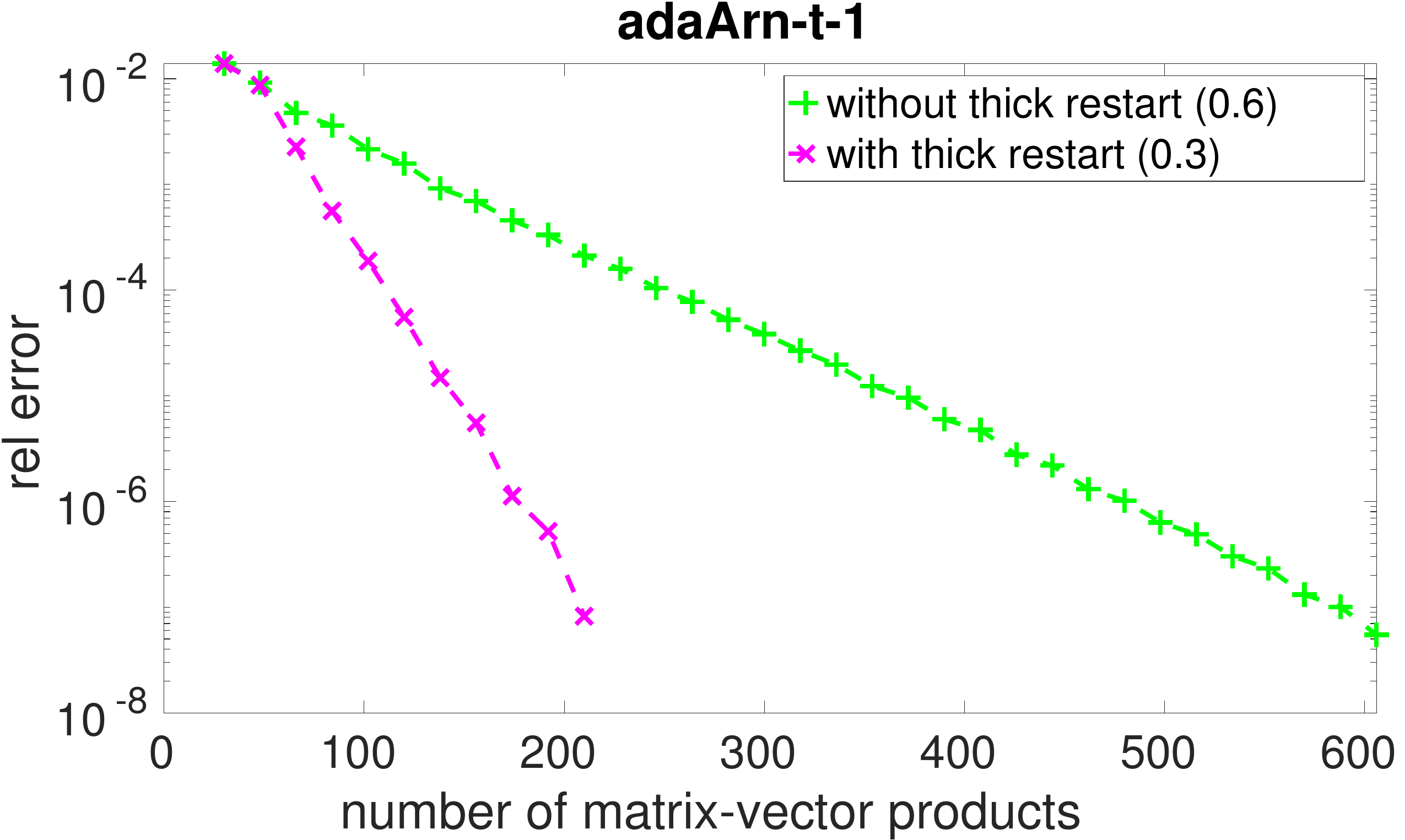}
    \end{subfigure}
    \begin{subfigure}{0.48\textwidth}
        \centering
        \includegraphics[width=\linewidth]{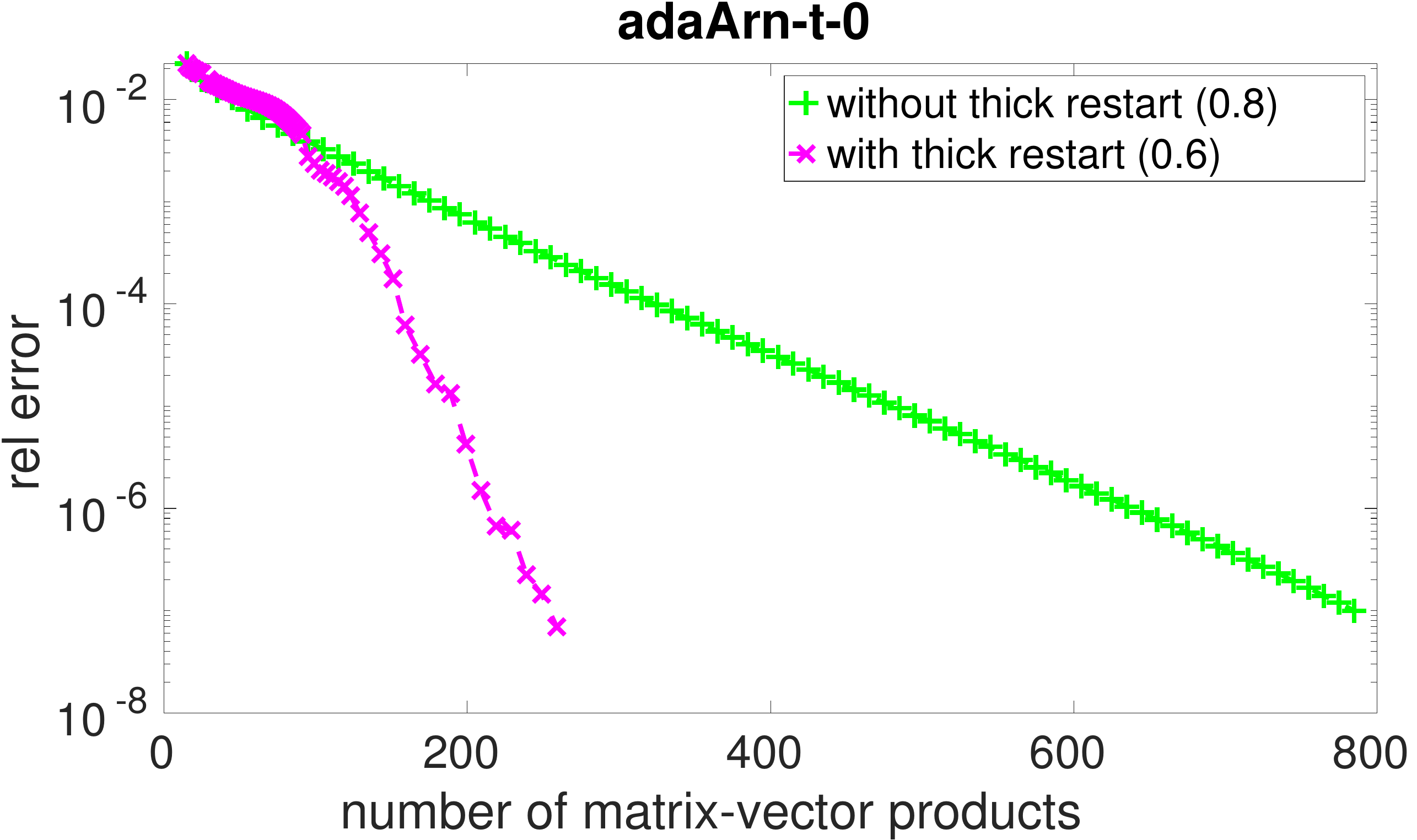}
    \end{subfigure}
    \caption{Thick restarting on the fractional graph Laplacian example.
    The runtime of each method is shown in the legend.
    }
    \label{fig:frac_laplacian_thick_restart}
\end{figure}

We also test thick restarting with \(5\) retained Ritz vectors.
As shown in Figure~\ref{fig:frac_laplacian_thick_restart}, thick restarting improves both convergence and runtime for all displayed methods.
For sArn, the number of matrix-vector products is reduced from \(300\) to \(200\), and the runtime decreases from \(0.4\) to \(0.3\) seconds.
For the adaptive methods, the benefits become more pronounced as \(t\) decreases.
For adaArn-2, thick restarting reduces the number of matrix-vector products from approximately \(360\) to \(210\) and the runtime from \(0.4\) to \(0.3\) seconds.
For adaArn-1, the corresponding reductions are from approximately \(600\) to \(210\) matrix-vector products and from \(0.6\) to \(0.3\) seconds, while for adaArn-0 they are from approximately \(800\) to \(260\) matrix-vector products and from \(0.8\) to \(0.6\) seconds.
These results show that thick restarting effectively compensates for the slower convergence caused by aggressive truncation while preserving the storage advantage of the adaptive methods.


\section{Conclusions}
\label{sec:conclusions}

We have developed a sketch-and-restart framework for computing \(f(\Mat{A})\Mat{b}\) with large sparse non-Hermitian matrices.
The framework combines quadrature-based restarting with Arnoldi-like decompositions generated by sketched or truncated Krylov processes.
The key observation is that several Krylov and sketched Krylov approximations share the common form \(\Basis_{m}f(\Hessen_{m})\Mat{e}_{1}\beta\), which leads to a unified error representation and restarting mechanism.
Within this framework, we combined the existing s-Arnoldi process with quadrature-based restarting and introduced a new sHm-Arnoldi process together with its restarted variant.
We further developed adaptive restarted Arnoldi and HmArnoldi methods based on the \(t\)-Arnoldi process and a rank-\(1\) update.
For Stieltjes functions, we established the convergence of the restarted sHmArnoldi method when \(\Mat{A}\) is positive real, together with a sufficient condition for the required location of the sketched harmonic Ritz values.
More broadly, the proposed framework separates the restarting mechanism from the basis generation procedure, making it possible to incorporate other randomized Krylov processes within the same framework.

Numerical experiments demonstrate the stability and efficiency of the proposed methods.
The sketched methods consistently reduce the runtime by lowering the cost of orthogonalization, while the adaptive methods avoid prescribing the restart length \(m\) a priori and can substantially reduce storage.
The experiments also demonstrate that thick restarting is an effective complement to aggressive truncation, recovering much of the resulting loss in convergence while preserving the associated storage savings.
In particular, the case \(t = 0\) is promising because the inner products required for the rank-\(1\) update can be computed simultaneously at the BLAS-2 level, suggesting that the proposed methods are well suited for high-performance implementations.

\section*{Acknowledgments}

JL thanks the China Scholarship Council~(CSC) for supporting his visit to The University of Manchester. 
SG and LN acknowledge funding from the UK's Engineering and Physical Sciences Research Council~(EPSRC grant EP/Z533786/1).
SG is supported by Royal Society Industry Fellowship IF/R1/231032. 

\bibliographystyle{siam}
\bibliography{refs}

\end{document}